\title{Approximate subgroups of residually nilpotent groups}
\date{}
\author{Matthew C. H. Tointon}
\address{Pembroke College, Cambridge, CB2 1RF, United Kingdom}
\email{mcht2@cam.ac.uk}
\thanks{The author was supported by ERC grant GA617129 `GeTeMo'}
\subjclass[2010]{11B30 (primary), 11P70 (secondary)}
\newtheorem{prop}{Proposition}[section]
\newtheorem{theorem}[prop]{Theorem}
\newtheorem{lemma}[prop]{Lemma}
\newtheorem{corollary}[prop]{Corollary}
\theoremstyle{definition}
\theoremstyle{remark}
\newtheorem{remark}[prop]{Remark}
\newtheorem*{remark*}{Remark}
\theoremstyle{theorem}
\newcommand{\C}{\mathbb{C}}
\newcommand{\N}{\mathbb{N}}
\newcommand{\Z}{\mathbb{Z}}
\newcommand{\Q}{\mathbb{Q}}
\numberwithin{equation}{section}
\begin{document}
\maketitle
\begin{abstract}
We show that a $K$-approximate subgroup $A$ of a residually nilpotent group $G$ is contained in boundedly many cosets of a finite-by-nilpotent subgroup, the nilpotent factor of which is of bounded step. Combined with an earlier result of the author, this implies that $A$ is contained in boundedly many translates of a coset nilprogression of bounded rank and step. The bounds are effective and depend only on $K$; in particular, if $G$ is nilpotent they do not depend on the step of $G$. As an application we show that there is some absolute constant $c$ such that if $G$ is a residually nilpotent group, and if there is an integer $n>1$ such that the ball of radius $n$ in some Cayley graph of $G$ has cardinality bounded by $n^{c\log\log n}$, then $G$ is virtually $(\log n)$-step nilpotent.
\end{abstract}

\tableofcontents

\section{Introduction}
Let $G$ be a group. In recent years there has been a considerable amount of study of subsets $A\subseteq G$ that have \emph{doubling} $K$ in the sense that $|A^2|\le K|A|$, where $K\ge1$ is some parameter. There is much motivation for the study of such sets already in the literature; rather than adding to it here, we simply point out that work in this area has had many applications in an impressively broad range of fields, and that the surveys \cite{bgt.survey,app.grps,ben.icm,helf.survey,sand.survey} provide more detail on the background to the field and on many of these applications.

It turns out that the study of sets in $G$ of bounded doubling essentially reduces to the study of sets called \emph{approximate subgroups} of $G$. A finite set $A\subseteq G$ is said to be a \emph{$K$-approximate subgroup} of $G$ if there exists a set $X\subseteq G$ of size at most $K$ such that $A^2\subseteq XA$. The reader may consult \cite{tao.product.set} for precise details of the relationship between sets of bounded doubling and approximate groups, although it is certainly clear that a $K$-approximate group has doubling at most $K$.

The definition of an approximate group is not particularly descriptive, and a central aim of approximate group theory is to extract more explicit, algebraic information about the structure of approximate groups. The most general result of this type is due to Breuillard, Green and Tao \cite{bgt}, which at its simplest level is as follows.
\begin{theorem}[Breuillard--Green--Tao {\cite[Theorem 2.12]{bgt}}]\label{thm:bgt}
Let $G$ be a group and suppose that $A$ is a $K$-approximate subgroup of $G$. Then there exist subgroups $H\lhd C\subseteq G$ such that
\begin{enumerate}
\item $H\subseteq A^{12}$;
\item $C/H$ is nilpotent of rank and step at most $O(K^2\log K)$;
\item $A$ can be covered by $O_K(1)$ left cosets of $C$.
\end{enumerate}
\end{theorem}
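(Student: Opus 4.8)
The plan is to follow the strategy of Breuillard, Green and Tao: pass to an infinitary limit, analyse it with the theory of locally compact groups, and then return to the finitary statement by compactness. Suppose the theorem fails, so that for each $n$ there is a $K$-approximate group $A_n$ in some group $G_n$ admitting no pair of subgroups with the asserted properties. Fixing a non-principal ultrafilter $\omega$, form the ultraproducts $\mathbf{A} = \prod_{n\to\omega}A_n \subseteq \mathbf{G} = \prod_{n\to\omega}G_n$. By \L{}o\'s's theorem $\mathbf{A}$ is an \emph{ultra approximate group}: there is an internal set $\mathbf{X}$ of bounded cardinality with $\mathbf{A}^2 \subseteq \mathbf{X}\mathbf{A}$, and the powers $\mathbf{A}^k$ are pseudofinite of controlled size. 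It then suffices to produce subgroups $\mathbf{H} \lhd \mathbf{C} \subseteq \mathbf{G}$ with $\mathbf{H} \subseteq \mathbf{A}^{12}$, with $\mathbf{C}/\mathbf{H}$ nilpotent of rank and step $O(K^2\log K)$, and with $\mathbf{A}$ covered by boundedly many cosets of $\mathbf{C}$: taking coordinatewise representatives of such witnesses contradicts the failure assumption for $\omega$-many $n$.

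Next I would attach to $\mathbf{A}$ a \emph{good model}. The central input is Hrushovski's theorem that an ultra approximate group admits a locally compact model: a homomorphism $f$ from $\langle\mathbf{A}\rangle$ (or a bounded-index piece of it) onto a locally compact group $L$ for which $f(\mathbf{A})$ is precompact with nonempty interior and some precompact identity neighbourhood has preimage inside a bounded power of $\mathbf{A}$, so that in particular $\ker f \subseteq \mathbf{A}^4$. This model can be obtained either through Hrushovski's model-theoretic argument or, so as to keep the bounds effective, through Sanders's results on approximate groups (building on Croot--Sisask), the route taken in \cite{bgt}. Applying the Gleason--Yamabe theorem to $L$ produces an open subgroup that is an inverse limit of Lie groups; projecting to a small enough Lie quotient reduces us to the case where the model $L$ is a connected Lie group. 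A covering argument then bounds its dimension: since $f(\mathbf{A}^2)$ is covered by $O(K)$ translates of $f(\mathbf{A})$ and $f(\mathbf{A})$ absorbs a fixed identity neighbourhood, iterating shows that $f(\mathbf{A})$ cannot contain too many disjoint translates of a small ball, and keeping track of constants gives $\dim L = O(K^2\log K)$.

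I would then extract the nilpotent structure inside the Lie model. A connected Lie group carries an identity neighbourhood with the escape property, on which Baker--Campbell--Hausdorff-type commutator estimates are available; these show that the subgroup generated by a small enough neighbourhood is nilpotent of step at most $\dim L = O(K^2\log K)$, and more precisely that $f(\mathbf{A})$ is controlled, up to bounded index and bounded powers, by a nilpotent Lie subgroup $N \le L$. Pulling $N$ back through $f$ and intersecting with $\langle\mathbf{A}\rangle$ gives $\mathbf{C}$; taking $\mathbf{H}$ to be $\ker f$, enlarged if necessary within $\mathbf{A}^{12}$ to clear the discrete part of $N$, gives a normal pair $\mathbf{H}\lhd\mathbf{C}$ with $\mathbf{H}\subseteq\mathbf{A}^{12}$ and $\mathbf{C}/\mathbf{H}$ nilpotent of rank and step $O(K^2\log K)$, the rank bound combining $\dim N$ with the number of generators of the discrete part. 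Since $f(\mathbf{A})$ lies in boundedly many cosets of a precompact piece of $N$, $\mathbf{A}$ lies in boundedly many cosets of $\mathbf{C}$, and transferring back to the $A_n$ finishes the proof.

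The main obstacle is the construction of the locally compact model together with the derivation of the explicit bound: this is where all the serious analysis lives --- the Gleason--Yamabe structure theory, the no-small-subgroups arguments, and the quantitative Sanders--Croot--Sisask input --- and it is what makes the theorem genuinely deep rather than a formal consequence of soft limiting arguments. A secondary subtlety is passing from the \emph{local} and \emph{approximate} objects the model naturally supplies (identity neighbourhoods, precompact sets, nilpotent-by-finite pieces) to the honest normal subgroups $H \lhd C$ with $H \subseteq A^{12}$ demanded by the statement, which needs some care with commutators and with absorbing small error subgroups into bounded powers of $A$.
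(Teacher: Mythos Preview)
The paper does not give its own proof of this statement: Theorem~\ref{thm:bgt} is quoted verbatim from \cite[Theorem~2.12]{bgt} as background and motivation, and the paper explicitly notes that ``the use of ultrafilters in the proof of Theorem~\ref{thm:bgt} makes it ineffective''. The paper's actual contribution is Theorem~\ref{thm:nag}, an effective analogue for residually nilpotent groups, proved by entirely different and much more elementary means (the Gleason-type counting of Lemma~\ref{lem:gleason}, the iterative construction of Proposition~\ref{prop:dim.lem}, and the abelian Freiman theorem). So there is nothing in the paper to compare your proposal against.

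That said, your sketch is a broadly accurate summary of the strategy of \cite{bgt}: ultraproduct to an ultra approximate group, construct a locally compact model (via Hrushovski's theorem or the Sanders--Croot--Sisask route), apply Gleason--Yamabe to pass to a Lie model, bound $\dim L$ by a covering argument, extract nilpotent structure via escape and commutator estimates, and pull back. One small point of clarification: you write ``so as to keep the bounds effective, through Sanders's results'', but the ultraproduct step at the outset is precisely what makes the covering bound $O_K(1)$ ineffective, regardless of how the model is constructed downstream; the Sanders input helps build the model but does not rescue effectivity of the final statement. Beyond that, your outline is a proof \emph{idea} rather than a proof --- the genuinely hard content (Gleason--Yamabe, the no-small-subgroups analysis, the precise transfer from local Lie structure to honest subgroups $H\lhd C$ with $H\subseteq A^{12}$) is acknowledged but not carried out, which is appropriate for a sketch but would not constitute a self-contained argument.
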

By the \emph{rank} of a nilpotent group here we mean the minimum number of elements needed to generate it as a group.

The use of ultrafilters in the proof of Theorem \ref{thm:bgt} makes it ineffective in the sense that no explicit bound is known on the number of left cosets of $C$ required to cover $A$. There are, however, a number of results of various authors that give explicit bounds in this theorem if one is prepared to restrict to certain specific classes of group; see, for example, \cite{bg,sol.lin,unitary,bgt.lin,bgt.lin.note,freiman,gill-helf,green-ruzsa,helfgott1,helfgott2,pyb-sza,ruzsa.Z,ruzsa,tao.solv,nilp.frei}. The main purpose of this paper is to present a short argument giving explicit bounds in Theorem \ref{thm:bgt} in the case that $G$ is residually nilpotent, as follows.
\begin{theorem}\label{thm:nag}
Let $G$ be a residually nilpotent group and suppose that $A$ is a $K$-approximate subgroup of $G$. Then there exist subgroups $H\lhd C\subseteq G$ such that
\begin{enumerate}
\item $H\subseteq A^{O_K(1)}$;
\item $C/H$ is nilpotent of step at most $K^6$;
\item $A$ can be covered by $\exp(K^{O(1)})$ left cosets of $C$.
\end{enumerate}
\end{theorem}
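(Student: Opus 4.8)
We may as well replace $G$ by $\langle A\rangle$: this changes neither the hypothesis that $G$ is residually nilpotent nor the conclusion, since subgroups of $\langle A\rangle$ are subgroups of $G$. So assume $G=\langle A\rangle$ is finitely generated and residually nilpotent, and write $\gamma_j=\gamma_j(G)$ for its lower central series, so that $\bigcap_j\gamma_j=\{1\}$. The point of residual nilpotence, beyond giving an ambient filtration by the $\gamma_j$, is a finiteness principle: since $A$ is finite, for every fixed $t$ there is an $n$ with $\gamma_n\cap A^t=\{1\}$, so the quotient map $\pi_n\colon G\to G/\gamma_n$ is injective on $A^t$. Hence any subgroup of the nilpotent group $G/\gamma_n$ that is contained in $\pi_n(A^t)$ lifts uniquely to an isomorphic subgroup of $G$ contained in $A^t$, and any covering of $\pi_n(A)$ by cosets pulls back to a covering of $A$ by the same number of cosets. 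Thus it suffices to prove the theorem for $K$-approximate subgroups of nilpotent groups \emph{with all of the bounds --- the step bound $K^6$, the exponent $O_K(1)$, and the bound $\exp(K^{O(1)})$ on the number of cosets --- independent of the nilpotency class}; equivalently, to carry out the construction inside $G=\langle A\rangle$ in such a way that how far one descends its lower central series never enters the bounds. In the language of Theorem~\ref{thm:bgt} this amounts to extracting an explicit bound on the number of cosets in the residually nilpotent case, the step and rank already being explicit there.

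I would build $C$ and $H$ by descending the lower central series of $G$ one quotient at a time. The image of $A$ in the abelianisation $G/\gamma_2$ is a $K$-approximate subgroup of an abelian group, so by the effective abelian Freiman theorem of Green and Ruzsa it is contained in $\exp(K^{O(1)})$ cosets of a generalised arithmetic progression of rank $K^{O(1)}$; lifting a generating set of that progression into $A^{O_K(1)}$ gives a first batch of generators. One then repeats the analysis with (commutators of) a \emph{fixed} bounded power of $A$ inside $\gamma_2/\gamma_3$, then inside $\gamma_3/\gamma_4$, and so on --- at each stage the relevant data is again an approximate subgroup of an abelian group --- and the numbers of cosets multiply as one goes down. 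Keeping to a fixed bounded power of $A$, rather than powers growing with the level, is what ensures, via the Pl\"unnecke--Ruzsa inequalities, that the approximate groups appearing at every level have doubling bounded by a fixed power of $K$, so that each level costs only a factor $\exp(K^{O(1)})$.

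The heart of the matter --- and the step I expect to be the main obstacle --- is to show that only polynomially many levels, in fact at most $K^6$, can contribute anything nontrivial: past that point the relevant commutators must be absorbable into $A^{O_K(1)}$, so that everything below the last contributing level can be collected into a single subgroup $H\subseteq A^{O_K(1)}$, leaving a quotient $C/H$ whose lower central series terminates after at most $K^6$ steps (this is the step bound), while multiplying at most $K^6$ factors each of size $\exp(K^{O(1)})$ leaves the total number of cosets at $\exp(K^{O(1)})$. The danger to be overcome is that the iterated commutators of the generators produced at the first few levels proliferate exponentially with the depth, which would both destroy the step bound and make the number of cosets grow with the class of $G$; one has to argue that the approximate-group hypothesis --- propagated down the filtration through the Pl\"unnecke estimates together with the abelian structure theorem applied at each level --- forces this proliferation to terminate within $K^6$ levels. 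It is exactly the residual nilpotence of $G$ that makes ``within $K^6$ levels'' a uniform, class-free statement, and that lets one transport the resulting $C$ and $H$ back into $G$ at the end via the injectivity of $\pi_n$ on a sufficiently large power of $A$ noted at the outset.
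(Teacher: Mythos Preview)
Your reduction from the residually nilpotent to the nilpotent case is essentially the paper's own (Lemma~\ref{lem:resid.reduc} and the proof in Section~\ref{sec:resid}), so that part is fine. The difficulty, as you yourself flag, is the uniform $K^6$ bound on the step --- and here your proposal has a genuine gap: you offer no mechanism for it. Pl\"unnecke--Ruzsa together with the abelian Freiman theorem applied level by level down the \emph{lower central series of $G$} do not by themselves prevent an unbounded number of quotients $\gamma_i/\gamma_{i+1}$ from each contributing something nontrivial, and your hope of ``keeping to a fixed bounded power of $A$'' does not survive iterated commutator-taking: commutators of elements of $A^m$ lie in $A^{4m}$, so the powers \emph{do} grow with depth unless one arranges things carefully.

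What the paper does instead is quite different from a descent along $\gamma_i(G)$. The key device is the Gleason-type Lemma~\ref{lem:gleason}: if one can exhibit a chain $\{1\}=H_0\subseteq H_1\subseteq\cdots\subseteq H_k$ together with witnesses $h_i\in (A^m\cap H_i)\setminus A^2H_{i-1}$, then the cosets $Ah_i$ are pairwise disjoint and hence $k\le|A^{m+1}|/|A|\le K^m$. The construction (Proposition~\ref{prop:dim.lem}) does \emph{not} take $H_i=\gamma_i(G)$; rather, at each stage it passes to a quotient $C_j/H_j$, locates an element $\omega_j\in B_j^3\setminus B_j$ lying as deep as possible in a central series of that quotient, defines the next normal subgroup $D_{j+1}$ from what lies strictly deeper, and replaces $C_j$ by a large centraliser of $\omega_j$ (found via Lemma~\ref{lem:centraliser}). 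The point of this somewhat elaborate dance is precisely to manufacture, at every step, a witness $\gamma_{j+1}\in A^6\setminus A^2H_j$ --- the ``escape'' needed for the Gleason argument with $m=6$, yielding $k\le K^6$. Descending the fixed lower central series of $G$ gives no such escape in general: there is nothing forcing $A^6\cap\gamma_i\not\subseteq A^2\gamma_{i+1}$ at each level. Once Proposition~\ref{prop:dim.lem} is in hand, a second round of work (Proposition~\ref{prop:strong.dim.lem} and the proof of Proposition~\ref{prop:nag}) is still needed to turn the chain $D_i$ into an honest $H\subseteq A^{O_K(1)}$ with $C/H$ nilpotent of step $\le K^6$; this is where the growth of powers is absorbed into the unspecified $O_K(1)$, and it uses the abelian Freiman theorem only once, at the very end, rather than at every level.
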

Let us emphasise in particular that if $G$ is nilpotent then Theorem \ref{thm:nag} applies with bounds that do not depend on the nilpotency class of $G$.

The proof of Theorem \ref{thm:nag} is strongly inspired by the so-called nilpotent Freiman dimension lemma of Breuillard, Green and Tao \cite{nfdl}, which is a similar result valid in the less general setting of a residually \emph{torsion-free} nilpotent group.

\begin{remark*}
It is not known what the optimal bounds should be in Theorems \ref{thm:bgt} and \ref{thm:nag}, although it would be surprising if Theorem \ref{thm:nag} could not be improved. Breuillard and the author \cite[Fact 4.18]{bt} have given an example to show that in Theorem \ref{thm:bgt} one cannot in general cover $A$ with fewer than $K^{\frac{1}{200}\log\log\log\log K}$ cosets of $C$. Eberhard \cite{eberhard} subsequently refined this construction to show that even $K^{\log\log K}$ cosets are not in general sufficient.
\end{remark*}

\begin{remark*}
In principle, our proof of Theorem \ref{thm:nag} also gives a bound on the rank of the nilpotent quotient $C/H$, at least when $G$ is assumed to be nilpotent. However, an earlier result of the author \cite{nilp.frei} gives the much better bound $\exp(\exp(K^{O(1)}))$, as we note in Corollary \ref{cor:nag} below. An explicit bound on the order of the product set of $A$ in which $H$ is contained could also be computed from our argument, but it is rather poor, being roughly a tower of exponentials of height $K^{O(1)}$, and so we do not quantify it precisely.
\end{remark*}

\begin{remark}\label{rem:linear}Every finitely generated linear group is virtually residually nilpotent \cite[Corollary 1.7]{wehrfritz}; see also \cite[pp. 376--377]{lub-seg}. One could therefore, in principle, deduce a version of Theorem \ref{thm:nag} for any given finitely generated subgroup of $GL_n(\mathbb K)$, with $\mathbb K$ an arbitrary field. Care is needed, however. In some cases the consequences are trivial; for example, if $\mathbb{K}$ is a finite field then the arguments of \cite{lub-seg,wehrfritz} exhibit the trivial group as the finite-index residually nilpotent subgroup of $GL_n(\mathbb{K})$. In other cases the consequences are weaker than those given by earlier results; for example, if $\mathbb K$ has characteristic zero and $n$ is fixed then results of Breuillard--Green--Tao \cite{bgt.lin,bgt.lin.note}, or of Pyber--Szab\'o \cite{pyb-sza} and Breuillard--Green \cite{sol.lin}, imply Theorem \ref{thm:nag} for $G\subseteq GL_n(\mathbb K)$ but with stronger bounds. Nonetheless, in certain cases Theorem \ref{thm:nag} does appear to give new information. For example, the kernel of the projection $GL_n(\Z)\to GL_n(\Z/p\Z)$ is residually nilpotent (see \cite[proof of Proposition 1.6]{wehrfritz} or \cite[p. 377]{lub-seg}), and so Theorem \ref{thm:nag} applies directly to its approximate subgroups, whereas the results of \cite{sol.lin,bgt.lin,bgt.lin.note,pyb-sza} do not apply if $n$ is not \emph{a priori} bounded.
\end{remark}

\bigskip

\noindent\textsc{Coset nilprogressions and a more detailed result.} Breuillard, Green and Tao \cite{bgt} in fact proved a more detailed result than that given by Theorem \ref{thm:bgt}. In order to state it we first need a definition. Given elements $x_1,\ldots,x_r\in G$ and positive integers $L_1,\ldots,L_r$, we define the set $P(x_1,\ldots,x_r;L_1,\ldots,L_r)$ to consist of all those elements of $G$ that can be expressed as words in the $x_i$ and their inverses, in which each $x_i$ and its inverse appear at most $L_i$ times between them. If the $x_i$ generate an $s$-step nilpotent subgroup of $G$ then $P(x_1,\ldots,x_r;L_1,\ldots,L_r)$ is said to be a \emph{nilprogression} of \emph{rank} $r$ and \emph{step} $s$. Finally, if $C$ is a subgroup of $G$ and $H$ is a normal subgroup of $C$, and $Q$ is a nilprogression of rank $r$ and step $s$ in $C/H$, then the set $QH$ is said to be a \emph{coset nilprogression} of rank $r$ and step $s$ in $G$.

A more precise version of Theorem \ref{thm:bgt} then states that if $G$ is an arbitrary group, and $A$ is a $K$-approximate subgroup of $G$, then $A$ can be covered by $O_K(1)$ left translates of a coset nilprogression $P\subseteq A^{12}$ of rank and step at most $O(K^2\log K)$ \cite[Theorem 2.12]{bgt}. As before, the bound $O_K(1)$ on the number of left translates of $P$ needed to cover $A$ is ineffective.

The author \cite{nilp.frei} has given an effective version of this more detailed result valid in the case that $G$ is a nilpotent group of bounded step (see Theorem \ref{thm:step}, below). Theorem \ref{thm:nag} allows us to extend this to residually nilpotent groups, and in particular to make the bounds independent of the step of $G$ in the case that $G$ is nilpotent.
\begin{corollary}[Freiman-type theorem for residually nilpotent groups]\label{cor:nag}
Let $G$ be a residually nilpotent group and suppose that $A$ is a $K$-approximate subgroup of $G$. Then there is a coset nilprogression $P\subseteq A^{O_K(1)}$ of rank at most $\exp(\exp(K^{O(1)}))$ and step at most $K^6$ such that $A$ can be covered by $\exp(K^{O(1)})$ left translates of $P$.
\end{corollary}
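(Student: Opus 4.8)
The plan is to deduce Corollary \ref{cor:nag} by combining Theorem \ref{thm:nag} with the author's earlier effective Freiman-type theorem for nilpotent groups of bounded step (Theorem \ref{thm:step}, cited in the text). First I would apply Theorem \ref{thm:nag} to the $K$-approximate subgroup $A$ of the residually nilpotent group $G$, obtaining subgroups $H\lhd C\subseteq G$ with $H\subseteq A^{O_K(1)}$, with $C/H$ nilpotent of step at most $K^6$, and with $A$ covered by $\exp(K^{O(1)})$ left cosets $g_1C,\ldots,g_mC$ of $C$, where $m\le\exp(K^{O(1)})$. The idea is then to pass to the quotient group $C/H$ and to a suitable approximate subgroup of it, apply the bounded-step result there, and pull the resulting coset nilprogression back to $G$.

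The key step is to produce, inside $C/H$, an approximate subgroup to which Theorem \ref{thm:step} applies. For this I would consider the projection $\pi\colon C\to C/H$ and look at the image of $A\cap C$ (or of a bounded power of $A$ intersected with $C$) under $\pi$. A standard covering argument shows that intersecting $A$ with a subgroup costs only a bounded factor: since $A$ is a $K$-approximate group covered by boundedly many cosets of $C$, the set $B:=\pi((A^2\cap C))$ — or more precisely an appropriate such image — is an $O_K(1)$-approximate subgroup of $C/H$, and $A\cap C$ is covered by boundedly many translates of its preimage. (The precise power of $A$ one intersects with $C$ is a routine bookkeeping matter; one uses that $H\subseteq A^{O_K(1)}$ so that the preimage under $\pi$ of $B$ sits inside a bounded power of $A$.) Applying Theorem \ref{thm:step} to the $O_K(1)$-approximate subgroup $B$ of the nilpotent group $C/H$, whose step is at most $K^6$, yields a nilprogression $Q\subseteq B^{O_K(1)}$ in $C/H$ of rank at most $\exp(\exp(K^{O(1)}))$ and step at most $K^6$, together with boundedly many translates of $Q$ covering $B$; here the crucial point is that the bounds furnished by Theorem \ref{thm:step} depend only on the doubling parameter and on the step bound $K^6$, hence ultimately only on $K$.

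Finally I would assemble the pieces. The set $P:=QH$ is by definition a coset nilprogression in $G$ of the same rank and step as $Q$, namely rank at most $\exp(\exp(K^{O(1)}))$ and step at most $K^6$. Since $Q\subseteq B^{O_K(1)}$ and $B$ is a bounded power of $\pi(A)$ with $H\subseteq A^{O_K(1)}$, the preimage $\pi^{-1}(Q)\supseteq P$ lies in $A^{O_K(1)}$, so $P\subseteq A^{O_K(1)}$ as required. Chaining the cover of $A$ by $\exp(K^{O(1)})$ cosets of $C$ with the cover of $B$ by boundedly many translates of $Q$ (pulled back through $\pi$), and using that each coset of $C$ meeting $A$ contributes a bounded number of translates of $P$, gives a cover of $A$ by $\exp(K^{O(1)})$ left translates of $P$. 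The main obstacle is the bookkeeping in the middle step: one must be careful to choose the right bounded power of $A$ when intersecting with $C$ and when projecting to $C/H$, so that simultaneously (i) the projected set is genuinely an $O_K(1)$-approximate group, (ii) its iterated products pull back into a bounded power of $A$, and (iii) the covering numbers multiply to something still of the form $\exp(K^{O(1)})$; none of this is deep, but it requires the standard approximate-group covering lemmas to be applied in the correct order.
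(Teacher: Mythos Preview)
Your proposal is correct and follows essentially the same route as the paper: apply Theorem \ref{thm:nag}, use Lemma \ref{lem:2.2.ii} and Lemma \ref{lem:int.app.grp} to see that $\pi(A^2\cap C)$ is a $K^3$-approximate subgroup of the $K^6$-step nilpotent group $C/H$, then apply Theorem \ref{thm:step} and pull back. One small simplification worth noting: Theorem \ref{thm:step} actually gives a \emph{coset} nilprogression $Q$ with $B\subseteq Q\subseteq B^{K^{O_s(1)}}$ (direct containment, not merely a cover by translates), so the final covering step is a bit cleaner than you indicate, and your set $P=QH$ should really be the pullback of the coset nilprogression $Q$ rather than $QH$ with $Q$ a bare nilprogression.
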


\begin{remark*}In the case that $G$ is abelian the so-called \emph{polynomial Freiman--Ruzsa conjecture} asserts that a $K$-approximate group $A$ should be covered by $K^{O(1)}$ translates of a coset progression of rank $O(\log K)$ and cardinality at most $|A|$. These bounds would be optimal. The best result in this direction is due to Sanders \cite{sanders}, who has shown that one can cover $A$ with $\exp((\log K)^{O(1)})$ translates of a progression of rank $(\log K)^{O(1)}$. Schoen \cite{schoen} had previously obtained essentially optimal bounds in a closely related variant of this problem, showing that $A$ is contained in a \emph{single} coset progression of dimension at most $K^{1+O((\log K)^{-1/2})}$ and cardinality at most $\exp(K^{1+O((\log K)^{-1/2})})|A|$ (similar bounds in this variant can also be computed from the Sanders result).
\end{remark*}

\begin{remark*}
The abelian case of Corollary \ref{cor:nag} (stated as Theorem \ref{thm:ab.frei}, below) is ultimately an ingredient in the proof of Corollary \ref{cor:nag}. It appears that if one modified the argument of \cite{nilp.frei} to optimise the rank of the nilprogression rather than the number of translates required to cover the approximate group, and then applied the arguments of the present paper with the Sanders bounds in Theorem \ref{thm:ab.frei}, one could swap the bounds on the rank and the number of translates in Corollary \ref{cor:nag} (thus, the rank of $P$ would be at most $\exp(K^{O(1)})$, with $A$ being covered by $\exp(\exp(K^{O(1)}))$ translates of $P$). However, it does not appear that one could improve either bound without worsening the other, even assuming the polynomial Freiman--Ruzsa conjecture, and so we do not pursue this matter here.
\end{remark*}

\begin{remark*}
Breuillard, Green and Tao's more detailed version \cite[Theorem 2.12]{bgt} of Theorem \ref{thm:bgt} actually gives a bit more qualitative information than Corollary \ref{cor:nag}. Specifically, the coset nilprogression can be taken to be in \emph{$O_K(1)$-normal form} (see \cite[Definition 2.6]{bgt}). In a very recent preprint \cite{tess-toi}, Tessera and the author have shown that the coset nilprogression of Theorem \ref{thm:step}, and hence that of Corollary \ref{cor:nag}, can also be taken to be in $O_K(1)$-normal form. See \cite{tess-toi} for more details and background.
\end{remark*}

\bigskip

\noindent\textsc{Residually nilpotent groups of bounded exponent.} Let us point out a specific setting in which our argument gives stronger bounds than those of Theorem \ref{thm:nag} and Corollary \ref{cor:nag}. Ruzsa \cite{ruzsa} famously showed that if $A$ is a set of doubling $K$ inside an abelian group in which every element has order at most $r$, then $A$ is contained inside a genuine subgroup of cardinality at most $r^{K^4}K^2|A|$. Our argument provides the following generalisation of this statement to residually nilpotent groups.
\begin{theorem}\label{thm:pgrp}
Let $G$ be a residually nilpotent group in which every element has order at most $r$. Suppose that $A$ is a $K$-approximate subgroup of $G$. Then $A$ can be covered by $K^{35K^6+2}$ left cosets of a nilpotent subgroup contained in $A^{(3r+2)K^6+2}$.
\end{theorem}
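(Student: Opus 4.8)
The plan is to follow the same strategy that proves Theorem \ref{thm:nag}, but to exploit the bounded exponent hypothesis at the final stage to upgrade the conclusion from ``bounded cosets of a finite-by-nilpotent subgroup'' to ``bounded cosets of a genuine nilpotent subgroup.'' First I would invoke Theorem \ref{thm:nag} to obtain subgroups $H\lhd C\subseteq G$ with $H\subseteq A^{O_K(1)}$, with $C/H$ nilpotent of step at most $K^6$, and with $A$ covered by $\exp(K^{O(1)})$ left cosets of $C$. The point is that in the residually nilpotent setting of bounded exponent, the subgroup $H$ produced by the argument can be taken to be finite; more precisely, the mechanism underlying Theorem \ref{thm:nag} locates $H$ as a subgroup sitting inside a suitable term of the lower central series of a nilpotent quotient of $G$, and when every element has order at most $r$ that subgroup is bounded in size. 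So the first task is to re-run the proof of Theorem \ref{thm:nag} keeping track of $H$, and to check that $|H|$ is bounded by a function of $r$, $K$ alone.

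The next step is a Ruzsa-type covering argument inside $C$, reminiscent of the one Ruzsa uses in the abelian bounded-exponent case \cite{ruzsa}. Having passed to $C$ (at the cost of $\exp(K^{O(1)})$ cosets), one works with $A' = A\cap C$ or with a large translate of $A$ contained in $C$, which remains an $O(K^{O(1)})$-approximate subgroup of $C$. Reducing modulo $H$, the image $\bar A'$ is an approximate subgroup of the nilpotent group $C/H$ of step at most $K^6$, in which every element still has order at most $r$. Here I would apply the bounded-step effective Freiman machinery (Theorem \ref{thm:step}) — or more directly, the fact that a nilpotent group of step $s$ and exponent dividing $r$ that is generated by a bounded-doubling set is itself of bounded size — to conclude that $\bar A'$ generates a subgroup $\bar N \le C/H$ of cardinality at most $r^{O(K^6\cdot K^{O(1)})} = \exp(O(rK^6))$-ish; the precise exponent $O(rK^6)$ in the statement comes from tracking how the step bound $K^6$ and the exponent $r$ interact in the nilprogression-to-subgroup passage. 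Pulling back, $N = $ (preimage of $\bar N$) is an extension of a bounded-order nilpotent group by the finite group $H$, hence nilpotent-by-finite; but since $H$ is itself of bounded order and everything has exponent at most $r$, one arranges that $N$ — or a bounded-index nilpotent subgroup of it — is actually nilpotent, and $A$ is covered by $\exp(K^{O(1)})$ left cosets of it.

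Finally I would verify the containment $N\subseteq A^{O(rK^6)}$. This is where one must be careful with the exponents on the product set. The subgroup $\bar N$ of $C/H$ lies in a bounded power of $\bar A'$: a nilprogression of rank $r'$ and step $s$ in a group of exponent $r$ is contained in a subgroup that is in turn contained in $A^{\,(\text{poly in }r,r',s)}$, and when one optimises, the exponent works out to $O(rK^6)$. Lifting back through $H\subseteq A^{O_K(1)}$ (which is absorbed into the constant) gives $N\subseteq A^{O(rK^6)}$ after adjusting the implied constant. The main obstacle I anticipate is the bookkeeping in this last step: showing that the step bound $K^6$ survives the descent to $C/H$ and that the exponent on $A$ really is linear in $r$ (and not, say, $r^2$ or worse) requires being careful about how many times one multiplies $\bar A'$ when converting the nilprogression generating $\bar N$ into a genuine subgroup, and about controlling the finite kernel $H$ without blowing up the power of $A$. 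The algebraic fact that a bounded-step nilpotent group of exponent $r$ generated by a $K$-approximate set has order $\exp(O(rK^6))$ — essentially a quantitative version of the observation that such a group is both bounded-step and bounded-exponent-and-rank, hence finite — is the technical heart; I would prove it by an induction on the nilpotency step, at each stage controlling the relevant abelian section via Ruzsa's theorem.
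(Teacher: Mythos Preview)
Your approach routes through Theorem \ref{thm:nag}, but this cannot yield the containment in $A^{O(rK^6)}$. The subgroup $H$ furnished by Theorem \ref{thm:nag} is only guaranteed to lie in $A^{O_K(1)}$, and as the paper remarks immediately after that theorem, the implied exponent is roughly a tower of exponentials of height $K^{O(1)}$. Thus the step where you write ``lifting back through $H\subseteq A^{O_K(1)}$ (which is absorbed into the constant)'' is fatal: that exponent is emphatically not absorbed into $O(rK^6)$, and you would end up with a far larger power of $A$ than claimed. Your nilprogression-to-subgroup conversion in $C/H$ is similarly vague: you assert the exponent ``works out to $O(rK^6)$'' without an argument, and in a step-$K^6$ nilpotent group of exponent $r$ the naive induction on step that you sketch would typically produce an exponent polynomial in $r$ with degree growing with the step, not linear in $r$. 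More fundamentally, your plan applies the bounded-exponent hypothesis only \emph{after} the heavy structural reduction of Theorem \ref{thm:nag}, when the point is that bounded exponent should \emph{replace} most of that machinery.

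The paper's argument is far more direct and bypasses Theorem \ref{thm:nag} entirely. It applies the preliminary Proposition \ref{prop:dim.lem}, which produces $k\le K^6$, elements $\gamma_1,\ldots,\gamma_k\in A^6$, and subgroups $H_i=\langle\gamma_i\rangle D_i$ with $D_i\subseteq A^2H_{i-1}$ and $C_k\subseteq A^2H_k$. The bounded-exponent hypothesis is used at exactly one point: since each $\gamma_i\in A^6$ has order at most $r$, the cyclic group $\langle\gamma_i\rangle$ lies in $A^{3r}$. Iterating $H_i\subseteq A^{3r+2}H_{i-1}$ over $k\le K^6$ steps gives $H_k\subseteq A^{(3r+2)K^6}$, hence $C_k\subseteq A^{(3r+2)K^6+2}$, and then property \eqref{item:size} together with Corollary \ref{cor:ruz.cov} gives the covering by $\exp(K^{O(1)})$ cosets. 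The passage from nilpotent to residually nilpotent is then a routine lift via Lemma \ref{lem:resid.reduc}. So the factors $r$ and $K^6$ in the exponent come straight from the chain of inclusions in Proposition \ref{prop:dim.lem}, not from any Ruzsa-style induction on the nilpotency step.
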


\bigskip

\noindent\textsc{A one-scale growth gap for residually nilpotent groups.} Let $G$ be a group with finite symmetric generating set $S$. A well-known and remarkable theorem of Gromov \cite{gromov} states that if $|S^n|$ is bounded by some polynomial in $n$ then $G$ is virtually nilpotent. There have since been various refinements and strengthenings of this result. Some, such as \cite[Theorem 7.1]{hrush}, \cite[Corollaries 11.2, 11.5 \& 11.7]{bgt} and \cite[Theorem 4.1]{bt}, were proved using approximate groups; in particular, each of these follows from Theorem \ref{thm:bgt} or variants of it. As one might therefore expect, Theorem \ref{thm:nag} also yields a refinement of Gromov's theorem in the residually nilpotent case.

Before we present this result, let us note that Shalom and Tao \cite{shalom-tao} have already given a refinement of Gromov's theorem in the general case, showing that there exists $c>0$ such that if
\begin{equation}\label{eq:shal.tao}
|S^n|\le n^{(\log\log n)^c},
\end{equation}
for some $n>1$ then $G$ is virtually nilpotent. In the residually nilpotent case, Grigorchuk and Lubotzky and Mann have shown that one can weaken the bound required on $|S^n|$ yet further: they show that if
\begin{equation}\label{eq:gap}
|S^n|< 2^{\lfloor\sqrt{n}\rfloor}
\end{equation}
for infinitely many $n\in\N$ then $G$ is virtually nilpotent \cite[Theorem E2]{anal.pro.p}. Grigorchuk \cite{grigorchuk} proved this first in the case that $G$ is residually a $p$-group; a lemma of Lubotzky and Mann \cite[Lemma 1.7]{lub-mann} then shows that his argument still works under the weaker assumption that $G$ is residually nilpotent. It has been suggested that \eqref{eq:gap} could be enough to imply that an \emph{arbitrary} group is virtually nilpotent \cite{gap.conj}.

Note that whereas the Grigorchuk--Lubotzky--Mann result requires the bound \eqref{eq:gap} to hold for infinitely many $n$ (we call this a `multi-scale' hypothesis), the Shalom--Tao result requires only that the bound \eqref{eq:shal.tao} hold for a single value of $n$ (we call this a `one-scale' hypothesis). It is not known whether the bound \eqref{eq:shal.tao} can be weakened further at the expense of reverting to a multi-scale hypothesis.

The following corollary of Theorem \ref{thm:nag} shows that in the class of residually nilpotent groups one has Gromov's theorem under a one-scale hypothesis with a slightly weaker bound than \eqref{eq:shal.tao}, and goes via a completely different argument to those of Grigorchuk, Lubotzky--Mann and Shalom--Tao.

\begin{corollary}[one-scale growth gap for residually nilpotent groups]\label{cor:growth}
There exists an absolute constant $c>0$ such that if $G$ is a residually nilpotent group with finite symmetric generating set $S$, and if there exists $n>1$ such that 
\begin{equation}\label{eq:1-sc.gap}
|S^n|\le n^{c\log\log n},
\end{equation}
then $G$ contains a $(\log n)$-step nilpotent subgroup of index $O_n(1)$.
\end{corollary}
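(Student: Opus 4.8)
The plan is to run the standard reduction from a one-scale growth bound to a structural statement about approximate subgroups, using Theorem~\ref{thm:nag} to keep the conclusion effective, and then to convert the ``finite-by-nilpotent'' subgroup it produces into an honestly nilpotent one of essentially the same step. Throughout we may assume that $n$ exceeds some threshold depending only on $c$: a ball in a Cayley graph grows strictly at each step until it equals the whole group, so $|S^n|\ge\min(n+1,|G|)$, and for $n$ below the threshold $e^{e^{1/c}}$ the hypothesis $|S^n|\le n^{c\log\log n}<n+1$ forces $|G|\le n$, so the trivial subgroup witnesses the conclusion with index $|G|\le n=O_n(1)$.

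First I would locate a good intermediate scale by a pigeonhole argument over multiplicative scales $n_j=3^j$. Telescoping the ratios $|S^{n_{j+1}}|/|S^{n_j}|$ over the roughly $\log n$ indices $j$ with $\sqrt n\le n_j$ and $3n_j\le n$, whose product is at most $|S^n|\le n^{c\log\log n}$, produces a scale $m$ with $\sqrt n\le m$, $3m\le n$ and $|S^{3m}|\le K_0|S^m|$ for some $K_0=(\log n)^{O(c)}$. Since $S^m$ is symmetric and contains the identity, a standard Ruzsa-type argument then upgrades this small tripling to a genuine $K$-approximate subgroup $A$ of $G$ with $S^m\subseteq A\subseteq S^{O(m)}$ and $K=K_0^{O(1)}=(\log n)^{O(c)}$; concretely one can use that $|S^{km}|\le K_0^{k-2}|S^m|$ for every $k\ge3$ and that a symmetric set $B\ni1$ with $|B^3|\le K|B|$ has $B^2$ an $O(K^{O(1)})$-approximate group (see \cite{tao.product.set}).

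Next I would apply Theorem~\ref{thm:nag} to $A$; this is the only step using residual nilpotence, and the only one at which the \emph{effective} bound on the number of cosets is needed (an ordinary finitary Gromov-type theorem would yield virtual nilpotence, but not the bound $\log n$ on the step, which comes precisely from the step bound $K^6$ here). We obtain subgroups $H\lhd C\subseteq G$ with $H\subseteq A^{O_K(1)}$, with $C/H$ nilpotent of step at most $K^6$, and with $A$ covered by $N:=\exp(K^{O(1)})$ left cosets of $C$. Two observations make this usable. First, $A^{O_K(1)}$ is a fixed power of the finite set $S$, so $H$ is a \emph{finite} group, of order at most $|S^n|^{O_K(1)}$, a quantity depending only on $n$. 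Second, any coset of $C$ lying within distance $m$ of the base point in the Schreier graph of $G/C$ with respect to $S$ has a representative in $S^m\subseteq A$, hence is one of the $N$ cosets covering $A$; as that Schreier graph is connected and $m\ge\sqrt n\ge N$ for $n$ above the threshold, a ball of radius $m$ with at most $N\le m$ vertices must be the entire graph, so $[G:C]\le N=\exp(K^{O(1)})=O_n(1)$.

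Finally I would pass to $C_0:=C_C(H)$. Since $H$ is finite, $[C:C_0]\le|\Aut H|$ is again bounded in terms of $n$ alone, so $[G:C_0]=O_n(1)$; and $C_0/(C_0\cap H)$, a subgroup of $C/H$, is nilpotent of step at most $K^6$. As $C_0$ centralises $H$ we have $C_0\cap H\subseteq Z(C_0)$, so $\gamma_{K^6+1}(C_0)\subseteq C_0\cap H\subseteq Z(C_0)$ and hence $\gamma_{K^6+2}(C_0)=1$; thus $C_0$ is nilpotent of step at most $K^6+1$. As $K\le(\log n)^{O(c)}$ with an absolute implied constant, choosing $c$ small enough forces $K^6+1\le\log n$ for all $n$ above the threshold, and $C_0$ is the desired $(\log n)$-step nilpotent subgroup of index $O_n(1)$. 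The point demanding most care is the calibration in the pigeonhole step: the band of scales must be wide enough that $K_0$, hence $K$ and the final step $K^6+1$, comes out as small as $(\log n)^{O(c)}$, yet the scales themselves must stay at least $\sqrt n$ so that the very crude bound $N=\exp((\log n)^{O(c)})$ on the number of cosets is still dwarfed by the scale --- which is what drives the Schreier-graph argument promoting ``covered by few cosets of $C$'' to ``of finite index in $G$''. What keeps the last step elementary, rather than needing structure theory for finite-by-nilpotent groups, is simply that $H$, lying in a fixed power of the finite set $S$, is automatically finite.
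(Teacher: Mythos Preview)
Your proof is correct and follows essentially the same route as the paper's own argument: pigeonhole over geometric scales between $\sqrt n$ and $n$ to find small tripling, upgrade to an approximate group, apply Theorem~\ref{thm:nag}, promote ``covered by few cosets'' to ``finite index'' using that the scale dominates the number of cosets, and finally centralise $H$ to obtain an honest nilpotent subgroup with step increased by one. The only differences are cosmetic: the paper pigeonholes over powers of $5$ and cites \cite[Lemmas~2.2 and~2.7]{bt} for the approximate-group upgrade and the finite-index step, whereas you use powers of $3$ and spell out the Ruzsa and Schreier-graph arguments directly; similarly, the paper's ``stabiliser $C'$ of the conjugation action of $C$ on $H$'' is exactly your $C_C(H)$, and you make explicit the one-line argument (via $C_0\cap H\subseteq Z(C_0)$) for why this subgroup is nilpotent of step at most $K^6+1$, which the paper leaves implicit.
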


\begin{remark*}
As in Remark \ref{rem:linear}, Corollary \ref{cor:growth} implies a growth-gap result for linear groups. Specifically, Corollary \ref{cor:growth} holds with the same constant $c$ when $G$ is a linear group, provided $n$ is large enough in terms of the dimension of $G$ and the ring generated by the matrix entries of a generating set for $G$. However, much stronger results should be available using the uniform Tits alternative (see the papers \cite{br.height.gap,br.tits} of Breuillard and \cite{br-gel} of Breuillard--Gelander) and uniform exponential growth for soluble groups (see the paper \cite{br.growth.sol} of Breuillard), and so we omit the details.
\end{remark*}

\bigskip\noindent\textsc{Outline of the paper.} In Section \ref{sec:app.grps} we review the necessary background material on approximate groups. In Section \ref{sec:prelim} we prove a preliminary structure theorem for nilpotent approximate groups, which is essentially the argument of \cite{nfdl} adapted to deal with the possibility of finite subgroups. We also deduce Theorem \ref{thm:pgrp} in the specific case that $G$ is nilpotent. In Section \ref{sec:central} we prove a structure theorem for an approximate subgroup of a nilpotent group $G$ that surjects onto the quotient $G/Z(G)$, and then in Section \ref{sec:conclusion} we combine everything to prove Theorem \ref{thm:nag} in the case that $G$ is nilpotent. In Section \ref{sec:resid} we deduce the general statements of Theorems \ref{thm:nag} and \ref{thm:pgrp} from their respective nilpotent versions, as well as proving Corollary \ref{cor:nag}. Finally, in Section \ref{sec:growth} we prove Corollary \ref{cor:growth}

\bigskip\noindent\textsc{Acknowledgements.} It is a pleasure to thank Emmanuel Breuillard for valuable conversations, comments and corrections; Corina Ciobotaru for corrections; Tom Sanders for help with the references; Terence Tao for encouraging the pursuit of this problem; and an anonymous referee for a careful reading of the paper and a number of helpful comments.

\section{Background on approximate groups}\label{sec:app.grps}
In this section we collect together various basic facts about approximate groups. We start with a simple but powerful combinatorial lemma, based on an earlier result of Gleason \cite[Lemma 1]{gleason}. This is a key tool in the nilpotent Freiman dimension argument of Breuillard, Green and Tao \cite{nfdl}, where it essentially allows the authors to bound the dimension of a torsion-free nilpotent $K$-approximate group in terms of $K$. Since the dimension also bounds the step, this is sufficient to imply Theorem \ref{thm:nag} in this case.
\begin{lemma}\label{lem:gleason}
Let $A$ be a finite symmetric subset of a group and let $m\in\N$. Let $\{1\}=H_0\subseteq H_1\subseteq\ldots\subseteq H_k$ be a nested sequence of groups such that $A^m\cap H_i\not\subseteq A^2H_{i-1}$. Then $|A^{m+1}|\ge k|A|$.
\end{lemma}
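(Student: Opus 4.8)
The plan is to run a Gleason-type covering argument: use the hypothesis to extract one representative $x_i$ from each ``level'' $i$, and then show that the left translates $Ax_i$ are pairwise disjoint subsets of $A^{m+1}$, each of cardinality $|A|$. This immediately yields $|A^{m+1}|\ge k|A|$.

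First I would fix, for each $i\in\{1,\dots,k\}$, an element $x_i\in A^m\cap H_i$ with $x_i\notin A^2H_{i-1}$; such an $x_i$ exists precisely because $A^m\cap H_i\not\subseteq A^2H_{i-1}$. Since $x_i\in A^m$, the translate $Ax_i$ is contained in $A\cdot A^m=A^{m+1}$, and since left translation by $x_i$ is a bijection of the group, $|Ax_i|=|A|$.

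The crux is the disjointness of the $Ax_i$. Suppose $i<j$ and $Ax_i\cap Ax_j\neq\emptyset$, say $ax_i=a'x_j$ with $a,a'\in A$. Then $x_j=(a')^{-1}ax_i$, and because $A$ is symmetric this gives $x_j\in A^2x_i$. Now $x_i\in H_i\subseteq H_{j-1}$, since $i\le j-1$ and the $H_\ell$ are nested, and $H_{j-1}$ is a subgroup, so $A^2x_i\subseteq A^2H_{j-1}$; hence $x_j\in A^2H_{j-1}$, contradicting the choice of $x_j$. Therefore $Ax_1,\dots,Ax_k$ are pairwise disjoint, and taking the union of these sets inside $A^{m+1}$ gives $|A^{m+1}|\ge\sum_{i=1}^k|Ax_i|=k|A|$.

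There is no serious obstacle here; the one point needing care is to use the containment $x_i\in H_{j-1}$ on the correct side, so that $A^2x_i$ lands in $A^2H_{j-1}$ rather than in $H_{j-1}A^2$ (which need not equal $A^2H_{j-1}$ as $H_{j-1}$ is not assumed normal). This is why one works with the right translate $A^2x_i$ and, correspondingly, covers $A^{m+1}$ by the left translates $Ax_i$ of $A$ rather than by $x_iA$. The symmetry of $A$ is used exactly once, to rewrite $(a')^{-1}a$ as an element of $A^2$.
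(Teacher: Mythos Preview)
Your proof is correct and essentially identical to the paper's: pick $h_i\in(A^m\cap H_i)\setminus A^2H_{i-1}$ and show the translates $Ah_i$ are pairwise disjoint subsets of $A^{m+1}$ via the same containment $A^2h_i\subseteq A^2H_{j-1}$ for $i<j$. You have simply made explicit a few details (the use of symmetry, the containment $Ax_i\subseteq A^{m+1}$) that the paper leaves implicit.
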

\begin{proof}
This is essentially \cite[Lemma 3.1]{nfdl}. For each $i=1,\ldots,k$ pick $h_i\in(A^m\cap H_i)\backslash A^2H_{i-1}$. It is sufficient to show that the sets $Ah_i$ are all disjoint. To see this, suppose that $Ah_i\cap Ah_j\ne\varnothing$ for some $j<i$. This would imply that $h_i\in A^2h_j\subseteq A^2H_j\subseteq A^2H_{i-1}$, contradicting the choice of $h_i$.
\end{proof}

The following standard lemma may be found in \cite[Lemma 2.10]{nilp.frei}, for example.
\begin{lemma}\label{lem:int.app.grp}
Let $G$ be a group with a subgroup $H$, and suppose that $A$ is a $K$-approximate subgroup of $G$. Let $m\in\N$. Then $A^m\cap H$ can be covered by $K^{m-1}$ left translates of $A^2\cap H$. In particular, $A^m\cap H$ is a $K^{2m-1}$-approximate group for every $m\ge2$.
\end{lemma}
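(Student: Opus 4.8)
The plan is to prove the slightly stronger ``covering'' form of the approximate-group property first, namely that $A^m$ is covered by $K^{m-1}$ left translates of $A$, and then to intersect everything with $H$. (Recall that an approximate subgroup is symmetric and contains the identity, so that $A^2 = A^{-1}A$ and $A^2\subseteq A^m$ for all $m\ge 2$.)

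First I would let $X$ be a set of size at most $K$ witnessing $A^2\subseteq XA$, and prove by induction on $m$ that $A^m\subseteq X^{m-1}A$. Indeed, assuming $A^{m-1}\subseteq X^{m-2}A$, we have $A^m = A^{m-1}A\subseteq X^{m-2}A^2\subseteq X^{m-2}XA = X^{m-1}A$. Since $|X^{m-1}|\le K^{m-1}$, this exhibits $A^m$ as a union of at most $K^{m-1}$ left translates $xA$ with $x\in X^{m-1}$.

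Next I would push this cover into $H$. For each $x\in X^{m-1}$ with $xA\cap H\ne\varnothing$, pick $g_x\in xA\cap H$. If $y\in xA\cap H$, then writing $y = xa$ and $g_x = xa'$ with $a,a'\in A$ gives $g_x^{-1}y = a'^{-1}a$, which lies in $A^2$ by symmetry of $A$ and lies in $H$ because $g_x,y\in H$. Hence $xA\cap H\subseteq g_x(A^2\cap H)$, and taking the union over those $x$ for which $xA\cap H$ is nonempty (of which there are at most $K^{m-1}$) shows that $A^m\cap H$ is covered by $K^{m-1}$ left translates of $A^2\cap H$, which is the first assertion. For the ``in particular'' clause, I would note that $A^m\cap H$ is a finite symmetric set containing the identity and that $(A^m\cap H)^2\subseteq A^{2m}\cap H$; applying the first part with $2m$ in place of $m$, the latter is covered by $K^{2m-1}$ left translates of $A^2\cap H\subseteq A^m\cap H$ (using $m\ge 2$), which is exactly the statement that $A^m\cap H$ is a $K^{2m-1}$-approximate group.

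There is no real obstacle here: the argument is the standard ``pull a cover back into a subgroup'' manipulation. The only points that need a moment's attention are the appeal to symmetry of $A$ (so that $a'^{-1}a\in A^2$) and the bookkeeping that ensures the number of translates of $A^2\cap H$ stays at $K^{m-1}$ — one discards the translates $xA$ that miss $H$ entirely, and what remains is indexed by a subset of $X^{m-1}$.
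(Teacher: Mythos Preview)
Your proof is correct and is exactly the standard argument; the paper itself does not give a proof but simply cites \cite[Lemma 2.10]{nilp.frei}, where the same manipulation (iterate $A^2\subseteq XA$ to get $A^m\subseteq X^{m-1}A$, then pull each translate $xA$ meeting $H$ back into $H$ via a chosen $g_x\in xA\cap H$) is carried out.
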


\begin{lemma}\label{lem:2.2.ii}
Let $G$ be a group and $H$ a subgroup. Suppose that $A$ is a finite symmetric subset of $G$. Then $|A|\le|A^2\cap H||AH/H|\le|A^3|$. Moreover, $A$ is covered by $|AH/H|$ translates of $A^2\cap H$.
\end{lemma}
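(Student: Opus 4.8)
The plan is to establish the two inequalities and the covering statement by a direct double-counting argument using coset representatives for $H$ in $G$. First I would choose a set $T\subseteq A$ of representatives for the left cosets of $H$ that meet $A$, so that $|T|=|AH/H|$ and $A\subseteq TH$. Writing each $a\in A$ as $a=th$ with $t\in T$ and $h\in H$, one has $h=t^{-1}a\in A^{-1}A=A^2$ (using that $A$ is symmetric) and also $h\in H$, so in fact $A\subseteq T(A^2\cap H)$. This immediately gives the covering statement and the first inequality $|A|\le|T|\,|A^2\cap H|=|AH/H|\,|A^2\cap H|$.

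For the second inequality $|A^2\cap H|\,|AH/H|\le|A^3|$, I would fix for each coset $tH$ (with $t\in T$) the element $t\in A$ representing it, and consider the map $(h,t)\mapsto th$ from $(A^2\cap H)\times T$ into $G$. This map is injective: if $th=t'h'$ with $h,h'\in A^2\cap H$ and $t,t'\in T$, then $th$ and $t'h'$ lie in the same left coset $tH=t'H$ of $H$, forcing $t=t'$ by the choice of $T$, and hence $h=h'$. So the image has size exactly $|A^2\cap H|\,|T|$. Finally each image point $th$ lies in $A\cdot(A^2\cap H)\subseteq A\cdot A^2=A^3$, so $|A^2\cap H|\,|AH/H|\le|A^3|$, as required.

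There is no serious obstacle here; the only point requiring a little care is the bookkeeping of which representative is used where — one must use the \emph{same} fixed transversal $T\subseteq A$ for both halves of the argument so that the coset-representative arguments line up, and one must invoke symmetry of $A$ to pass between $A^{-1}A$ and $A^2$. Everything else is routine.
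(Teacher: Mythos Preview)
Your argument is correct and is essentially the same as the paper's: pick a transversal $X\subseteq A$ for the left cosets of $H$ meeting $A$, observe that $A\subseteq X(A^2\cap H)\subseteq A^3$, and note that $|X(A^2\cap H)|=|X|\,|A^2\cap H|$ since distinct elements of $X$ lie in distinct cosets. The paper's proof is simply a more compressed version of what you wrote.
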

\begin{proof}
This is essentially {\cite[Lemma 2.2 (i)]{nfdl}}. Let $X\subseteq A$ be a minimal set of left-coset representatives for $H$ in $AH$, so that $|X|=|AH/H|$, and note that $|X(A^2\cap H)|=|X||A^2\cap H|$ . The lemma then follows from the fact that $A\subseteq X(A^2\cap H)\subseteq A^3$.
\end{proof}

\begin{corollary}\label{cor:ruz.cov}
Let $G$ be a group, let $A$ be a $K$-approximate subgroup, and let $H$ be a subgroup such that $|A^2\cap H|\ge|A|/K'$. Then $|AH/H|\le K'K^2$.
\end{corollary}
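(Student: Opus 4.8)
The plan is to combine Lemma~\ref{lem:2.2.ii} with the elementary observation that a $K$-approximate subgroup has tripling at most $K^2$. First I would record that if $A^2\subseteq XA$ with $|X|\le K$, then
\[
A^3=A^2A\subseteq XA^2\subseteq X^2A,
\]
so that $|A^3|\le K^2|A|$; note this uses only the defining covering property of a $K$-approximate subgroup (and not symmetry).

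Next I would apply Lemma~\ref{lem:2.2.ii} to the (symmetric) set $A$ together with the subgroup $H$. The relevant half of the conclusion there is the inequality $|A^2\cap H|\,|AH/H|\le|A^3|$. Feeding in the tripling bound from the previous step and the hypothesis $|A^2\cap H|\ge|A|/K'$ gives
\[
|AH/H|\le\frac{|A^3|}{|A^2\cap H|}\le\frac{K^2|A|}{|A|/K'}=K^2K',
\]
which is exactly the claim.

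There is essentially no obstacle: the whole argument is a two-line deduction. The only point that deserves a moment's attention is to use the correct inequality from Lemma~\ref{lem:2.2.ii}, namely $|A^2\cap H|\,|AH/H|\le|A^3|$, which controls $|AH/H|$ from above, rather than the companion bound $|A|\le|A^2\cap H|\,|AH/H|$, which points the wrong way.
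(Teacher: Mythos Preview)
Your argument is correct and is essentially identical to the paper's: both use the upper bound $|A^2\cap H|\,|AH/H|\le|A^3|$ from Lemma~\ref{lem:2.2.ii}, the tripling estimate $|A^3|\le K^2|A|$ coming from the approximate-group property, and the hypothesis $|A^2\cap H|\ge|A|/K'$. The only difference is that you spell out the tripling bound explicitly, whereas the paper just invokes ``the approximate group property''.
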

\begin{proof}
This observation is made in the proof of \cite[Theorem 1.1]{nfdl}. The upper bound of Lemma \ref{lem:2.2.ii}, the approximate group property and the hypothesis of the corollary imply that $|A^2\cap H||AH/H|\le K^2|A|\le K^2K'|A^2\cap H|$.
\end{proof}

\begin{lemma}\label{lem:size.of.int}
Let $A$ be a finite symmetric set in a group, and let $H$ be a subgroup such that $A^2\cap H=\{1\}$. Then $|A^m\cap H|\le |A^{m+1}|/|A|$.
\end{lemma}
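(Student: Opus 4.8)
The plan is to show that the left translates $Ah$, as $h$ ranges over $A^m\cap H$, are pairwise disjoint subsets of $A^{m+1}$, and then simply count.

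First I would fix two distinct elements $h_1,h_2\in A^m\cap H$ and suppose, for contradiction, that $Ah_1\cap Ah_2\ne\varnothing$. Then there are $a_1,a_2\in A$ with $a_1h_1=a_2h_2$, and hence $h_1h_2^{-1}=a_1^{-1}a_2$. The one point at which care is needed is here: since $A$ is symmetric we have $a_1^{-1}\in A$, so that $h_1h_2^{-1}\in A^2$; on the other hand $h_1h_2^{-1}\in H$ since $h_1,h_2\in H$. Thus $h_1h_2^{-1}\in A^2\cap H=\{1\}$, forcing $h_1=h_2$, a contradiction. This shows that the sets $Ah$ with $h\in A^m\cap H$ are pairwise disjoint.

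To conclude, note that each such translate $Ah$ has cardinality exactly $|A|$, since left translation is a bijection, and is contained in $A\cdot A^m=A^{m+1}$ because $h\in A^m$. Summing over the disjoint family $\{Ah:h\in A^m\cap H\}$ therefore gives $|A^{m+1}|\ge|A^m\cap H|\,|A|$, which is the claimed bound upon dividing by $|A|$. There is no serious obstacle here: the argument is a direct disjointness-and-counting computation, of the same flavour as the proof of Lemma \ref{lem:gleason}, the only subtlety being the use of the symmetry of $A$ noted above.
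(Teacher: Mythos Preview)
Your proof is correct and follows essentially the same disjointness-and-counting argument as the paper; the only cosmetic difference is that the paper shows the cosets $aH$ for $a\in A$ are pairwise disjoint, whereas you show the translates $Ah$ for $h\in A^m\cap H$ are pairwise disjoint, but both amount to proving $|A(A^m\cap H)|=|A|\,|A^m\cap H|$.
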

\begin{proof}
This is essentially found in the proof of \cite[Proposition 4.1]{nfdl}. First note that the sets $aH$ with $a\in A$ are disjoint. Indeed, if $a,a'\in A$ and $aH\cap a'H\ne\varnothing$ then $a^{-1}a'\in A^2\cap H$, and so $a=a'$. This implies in particular that $|A(A^m\cap H)|=|A||A^m\cap H|$, and the lemma follows.
\end{proof}

The next result is another key lemma from the Breuillard--Green--Tao nilpotent Freiman dimension argument \cite{nfdl}, where it allows the authors to locate an element in an approximate group with a large centraliser, which is in turn a key ingredient in finding a large nilpotent piece of that approximate group. It is also somewhat reminscent of \cite[Proposition 4.1]{helfgott1}.

\begin{lemma}\label{lem:centraliser}
Let $A$ be a $K$-approximate subgroup of a group $G$, and let $G=Z_1\supseteq Z_2\supseteq\ldots$ be a central series for $G$. Let $j$ be maximal such that $A^2\cap Z_j\ne\{1\}$, and let $\omega\in A^m\cap Z_j$. Then
\[
|A^2\cap C_G(\omega)|\ge\frac{|A|}{K^{2m+2}}.
\]
\end{lemma}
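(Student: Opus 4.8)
The plan is to count the distinct conjugates of $\omega$ by elements of $A$. Write $\Omega=\{a^{-1}\omega a:a\in A\}$ for this conjugacy ``orbit'' and consider the map $\phi\colon A\to G$, $a\mapsto a^{-1}\omega a$. First I would show that
\[
|A|\le|\Omega|\,|A^2\cap C_G(\omega)|.
\]
Indeed, $\phi(a)=\phi(b)$ holds precisely when $ba^{-1}\in C_G(\omega)$, i.e.\ when $b\in C_G(\omega)a$; so the fibre of $\phi|_A$ through $a$ equals $A\cap C_G(\omega)a$. Right-translating this fibre by $a^{-1}$, which is injective, embeds it into $Aa^{-1}\cap C_G(\omega)\subseteq A^2\cap C_G(\omega)$ (using that $A$ is symmetric, so $a^{-1}\in A$). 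Hence every fibre has size at most $|A^2\cap C_G(\omega)|$, and summing over the $|\Omega|$ fibres gives the displayed inequality. Thus it suffices to prove $|\Omega|\le K^{2m+2}$.

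This is where the central series enters. Since $Z_{j+1}$ is normal in $G$ and $[\omega,a]\in[Z_j,G]\subseteq Z_{j+1}$ for every $a$, each conjugate $a^{-1}\omega a$ differs from $\omega$ by an element of $Z_{j+1}$, so $\Omega\subseteq\omega Z_{j+1}$. On the other hand $a^{-1}\omega a\in A^{-1}A^mA=A^{m+2}$, and since $A^m$ is symmetric we have $\omega^{-1}\in A^m$; multiplying on the left by $\omega^{-1}$ therefore embeds $\Omega$ into $A^{2m+2}\cap Z_{j+1}$, so $|\Omega|\le|A^{2m+2}\cap Z_{j+1}|$. Now the maximality of $j$ gives $A^2\cap Z_{j+1}=\{1\}$, so Lemma~\ref{lem:size.of.int} applied with $H=Z_{j+1}$ and exponent $2m+2$ yields $|A^{2m+2}\cap Z_{j+1}|\le|A^{2m+3}|/|A|$. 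Finally the approximate-group property gives $|A^n|\le K^{n-1}|A|$ for all $n$, hence $|A^{2m+3}|\le K^{2m+2}|A|$ and so $|\Omega|\le K^{2m+2}$. Combining this with the first displayed inequality completes the proof.

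There is no substantial obstacle: the argument is essentially bookkeeping of product-set exponents. The one point that must be handled with care is the role of the maximality of $j$, which is exactly what forces $A^2\cap Z_{j+1}=\{1\}$ and hence permits Lemma~\ref{lem:size.of.int} to be applied to $Z_{j+1}$ rather than to $Z_j$ (for which the required conclusion would fail); note also that the hypothesis $A^2\cap Z_j\ne\{1\}$ ensures $Z_j\ne\{1\}$, so that $Z_{j+1}$ really is a term of the series.
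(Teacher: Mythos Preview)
Your proof is correct and is essentially the same as the paper's: the paper counts the values of the commutator map $a\mapsto[\omega,a]=\omega^{-1}(a^{-1}\omega a)$ rather than your conjugation map $a\mapsto a^{-1}\omega a$, but since these differ only by left translation by $\omega^{-1}$ the counts agree, and the pigeonhole on the most popular value is exactly your fibre bound rewritten. The bookkeeping of exponents and the use of Lemma~\ref{lem:size.of.int} via the maximality of $j$ are identical.
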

\begin{proof}
This is essentially found in the proof of \cite[Proposition 4.1]{nfdl}. For each $a\in A$ we have $[\omega,a]\in A^{2m+2}\cap Z_{j+1}$. Lemma \ref{lem:size.of.int} and the definition of $j$ imply that $|A^{2m+2}\cap Z_{j+1}|\le K^{2m+2}$, and so as $a$ ranges through $A$ the number of values taken by $[\omega,a]$ is at most $K^{2m+2}$. Fix $a$ so that $[\omega,a]$ is the most popular such value, so that $[\omega,x]=[\omega,a]$ for at least $|A|/K^{2m+2}$ elements $x\in A$. For each such $x$ we have $xa^{-1}\in A^2\cap C_G(\omega)$, and so the lemma holds.
\end{proof}

\begin{theorem}[Green--Ruzsa \cite{green-ruzsa}]\label{thm:ab.frei}
Suppose that $A$ is a $K$-approximte subgroup of an abelian group. Then there exist a subgroup $H\subseteq4A$, and elements $x_1,\ldots,x_r\in4A$ and positive integers $L_1,\ldots,L_r$ with $r\le K^{O(1)}$, such that $A\subseteq H+P(x_1,\ldots,x_r;L_1,\ldots,L_r)\subseteq K^{O(1)}A$.
\end{theorem}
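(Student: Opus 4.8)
The plan is to follow the standard Freiman-theoretic strategy of reducing to the torsion-free case via a Ruzsa-type modelling argument, and then proving the torsion-free case by an embedding into $\R^d$ and a geometry-of-numbers argument. First I would observe that by symmetrising we may assume $A=-A$ and $0\in A$, and by the Plünnecke--Ruzsa inequalities all the sumsets $kA$ have size $K^{O(1)}|A|$. The key structural dichotomy is to split the ambient abelian group as (torsion part) $\oplus$ (torsion-free part) locally; more precisely, one isolates the ``bounded torsion'' obstruction by passing to the subgroup $H$ generated by the small-order elements that $A$ sees. The subgroup $H\subseteq 4A$ will absorb the torsion, and modulo $H$ one is left with an approximate subgroup of a torsion-free abelian group.

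The main technical step is Ruzsa's modelling lemma: an approximate subgroup $A$ of a torsion-free abelian group with $|2A|\le K|A|$ can, after passing to a Freiman $8$-isomorphic copy inside $\Z^d$ for suitable $d$, be placed inside a generalised arithmetic progression. Concretely, one takes a large prime $p$, considers $A$ inside $\Z/p\Z$ (or a suitable quotient), and finds a dilate that is Freiman-isomorphic to a subset of a short interval; applying this repeatedly, or applying Ruzsa's covering lemma together with the fact that $A-A$ is covered by $K^{O(1)}$ translates of $A$, one produces a proper generalised arithmetic progression $Q$ of rank $r\le K^{O(1)}$ with $A\subseteq Q$ and $|Q|\le K^{O(1)}|A|$. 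Pulling back through the $H$-quotient and writing $Q=P(x_1,\ldots,x_r;L)$ with the $x_i$ lifted to elements of $4A$ (this is where one must be slightly careful that the progression generators and $H$ can both be taken inside $4A$, using that $H$ is generated by elements of $2A\cap H\subseteq 4A$ and that the progression can be extracted from $4A$), one obtains $A\subseteq H+P(x_1,\ldots,x_r;L)$.

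For the final containment $H+P(x_1,\ldots,x_r;L)\subseteq K^{O(1)}A$, I would use that $|H+Q|\le K^{O(1)}|A|$ together with the Ruzsa covering lemma: since $H+Q$ has size comparable to $|A|$ and contains $A$, and $A$ has bounded doubling, $H+Q$ is contained in a bounded number of translates of $A$ by elements of $A-A\subseteq K^{O(1)}$ copies of $A$, whence $H+Q\subseteq K^{O(1)}A$ after adjusting constants. The step I expect to be the main obstacle is controlling the rank $r$ and simultaneously keeping all the generators $x_i$ and the torsion subgroup $H$ inside the explicit set $4A$: the naive modelling argument produces generators in a larger sumset, and tightening this to $4A$ requires an appeal to Ruzsa's covering lemma in the form that $kA$ is covered by $K^{O(1)}$ translates of $2A$ by elements of $2A$, so that the progression structure descends to $4A$. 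Since this is a known result of Green and Ruzsa, I would ultimately cite \cite{green-ruzsa} for the precise bookkeeping rather than reproduce it in full.
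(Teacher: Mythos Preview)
The paper does not give a proof of this theorem at all: it is stated as a black-box citation of Green--Ruzsa \cite{green-ruzsa}, with no argument supplied. Your eventual resolution---to cite \cite{green-ruzsa} rather than reproduce the bookkeeping---is therefore exactly what the paper does, and no more is needed here.

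As a side remark on the sketch itself: the outline you give is not quite the Green--Ruzsa argument. Their proof does not proceed by a torsion/torsion-free splitting followed by Ruzsa modelling into $\Z^d$; rather, it uses Fourier analysis on the finite group $\langle A\rangle$ to locate a large Bohr set inside $2A-2A$, extracts the subgroup $H$ and the progression generators from the structure of that Bohr set, and then applies covering lemmas. The decomposition ``torsion part $\oplus$ torsion-free part'' is not available for a general abelian group, and the classical Ruzsa modelling lemma into $\Z$ requires torsion-freeness from the outset, which is precisely the obstacle Green and Ruzsa had to overcome. None of this affects the present paper, since the result is only quoted.
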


As we remarked in the introduction, Sanders \cite{sanders} has shown that one can take the rank of $P$ in Theorem \ref{thm:ab.frei} to be $(\log K)^{O(1)}$, with $A$ now contained in $\exp((\log K)^{O(1)})$ translates of $H+P$. It does not appear that this leads to better bounds in Theorem \ref{thm:nag}.

\begin{theorem}\label{thm:step}
Let $G$ be an $s$-step nilpotent group and suppose that $A$ is a $K$-approximate subgroup of $G$. Then there exists a coset nilprogression $P$ of rank at most $K^{e^{O(s)}}$ such that $A\subseteq P\subseteq A^{K^{O_s(1)}}$.
\end{theorem}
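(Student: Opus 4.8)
The plan is to prove Theorem~\ref{thm:step} by induction on the step $s$, peeling off one term of the lower central series of $G$ at a time; torsion is absorbed along the way into the subgroup parts of the coset nilprogressions produced. When $s=1$ the group $G$ is abelian, and after replacing $A$ by $A\cup A^{-1}\cup\{1\}$ (still $O(K)$-approximate, generating the same group) the statement is exactly Theorem~\ref{thm:ab.frei}: a coset progression is a coset nilprogression of step $1$, and, since $A$ is symmetric and contains the identity, the containment $H+P\subseteq K^{O(1)}A$ gives $H+P\subseteq A^{K^{O(1)}}$. For the inductive step we may assume $G=\langle A\rangle$, and we set $Z=\gamma_s(G)$, the final non-trivial term of the lower central series of $G$; this is central and abelian, and $G/Z$ is nilpotent of step $s-1$.

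Write $\pi\colon G\to G/Z$ for the quotient map. First I would apply the inductive hypothesis to the $K$-approximate subgroup $\pi(A)$ of $G/Z$, obtaining a coset nilprogression $\bar P=\bar Q\bar H$ in which $\bar Q=P(\bar x_1,\ldots,\bar x_r;\bar L)$ has rank $r\le K^{e^{O(s-1)}}$, $\bar H$ is a finite subgroup, and $\pi(A)\subseteq\bar P\subseteq\pi(A)^d$ for some $d=K^{O_{s-1}(1)}$. Next, since by Lemma~\ref{lem:int.app.grp} the set $A^{O(1)}\cap Z$ is a $K^{O(1)}$-approximate subgroup of the abelian group $Z$, I would apply Theorem~\ref{thm:ab.frei} to it, getting a subgroup $H_Z\le Z$ and a progression $P_Z=P(z_1,\ldots,z_m;M)$ of rank $m\le K^{O(1)}$ with each $z_j\in A^{O(1)}$ and $A^{O(1)}\cap Z\subseteq H_Z+P_Z\subseteq A^{O(1)}$. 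Then I would choose lifts $\tilde x_i\in A^d$ of the $\bar x_i$ and a finite set $\tilde Y\subseteq A^d$ mapping onto a generating set of $\bar H$, put $H=\langle H_Z\cup\tilde Y\rangle$, and assemble a nilprogression $Q$ in $G$ on a suitably ordered generating set built from the $\tilde x_i$, the $z_j$ and the basic commutators of the $\tilde x_i$, with the aim that $P:=QH$ satisfy $A\subseteq P\subseteq A^{K^{O_s(1)}}$. The containment $A\subseteq P$ should then follow from the fact that the image of any $a\in A$ lies in $\bar Q\bar H$, so $a$ agrees --- up to an element of $\langle\tilde Y\rangle$ and a correction term in $A^{O(1)}\cap Z\subseteq H_Z+P_Z$ --- with a lift of a bounded-multiplicity word in the $\bar x_i$.

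The crux, and the step I expect to be the main obstacle, is assembling $Q$ so that it is simultaneously large enough to contain the relevant part of $A$ and small enough to lie in a bounded power of $A$. The difficulty is that lifting a long word in the $\bar x_i$ to a word in the $\tilde x_i$ introduces a correction term in $Z=\gamma_s(G)$ given by a product of iterated commutators of weight $s$ in the $\tilde x_i$, the number of which grows polynomially in the length of the word; one therefore cannot absorb these corrections merely by enlarging the length parameters of a nilprogression on the $z_j$ alone, as that would force $Q\not\subseteq A^{K^{O_s(1)}}$. The resolution is to incorporate the basic commutators of weight at most $s$ in the $\tilde x_i$ --- of which there are $O(r^s)$ --- into the generating set of $Q$ directly; each of them lies in $A^{O(d)}\cap Z\subseteq H_Z+P_Z$ and is hence controlled, and with this enlarged but still upper-triangular generating set the corrections no longer accumulate. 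This is precisely where the rank jumps from $K^{e^{O(s-1)}}$ to $O(r^s+m)\le K^{e^{O(s)}}$, using that $s\cdot e^{O(s-1)}\le e^{O(s)}$. It then remains to verify the two containments with care: that $H=\langle H_Z\cup\tilde Y\rangle\subseteq A^{K^{O_s(1)}}$, using that $\langle\tilde Y\rangle$ surjects onto the finite group $\bar H$ with kernel inside $A^{O(1)}\cap Z\subseteq H_Z+P_Z$; and that $Q\subseteq A^{K^{O_s(1)}}$, using that each generator of $Q$ lies in $A^{O(1)}$ together with the upper-triangular form of the generating set. An alternative, and possibly cleaner, route for the inductive bookkeeping is to quotient instead by the finite characteristic torsion subgroup $T$ of $G=\langle A\rangle$, apply the nilpotent Freiman dimension lemma of \cite{nfdl} to $\pi(A)$ in the torsion-free (hence residually torsion-free) nilpotent group $G/T$, and lift back through the finite normal extension by $T$; the source of the rank blow-up is the same commutator bookkeeping.
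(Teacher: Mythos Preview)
The paper does not reprove this result: its entire proof is a two-sentence citation of the author's earlier paper \cite{nilp.frei} (Theorem~1.5 there), noting only that the refined rank bound $K^{e^{O(s)}}$ can be read off from that argument. So there is nothing in the present paper to compare your sketch against at a technical level.

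That said, your inductive scheme---peel off $\gamma_s(G)$, apply the hypothesis upstairs and Green--Ruzsa inside $\gamma_s(G)$, then splice---is the natural one and is broadly in the spirit of \cite{nilp.frei}. The serious gap is in the construction of the subgroup $H$. You claim the kernel of $\langle\tilde Y\rangle\twoheadrightarrow\bar H$ lies in $A^{O(1)}\cap Z$, but this kernel consists of all words in $\tilde Y\subseteq A^d$ that vanish in $\bar H$, and there is no reason such words are short. Worse, $\langle\tilde Y\rangle$ need not even be finite. For a concrete step-$2$ instance, take $G=\langle a,b,c\mid[a,b]=c^2,\ [a,c]=[b,c]=1\rangle$, which is torsion-free with $\gamma_2(G)=\langle c^2\rangle$ and $G/\gamma_2(G)\cong\Z^2\times\Z/2$; for a box-like $K$-approximate subgroup $A$, Green--Ruzsa applied to $\pi(A)$ naturally returns $\bar H=\langle\bar c\rangle\cong\Z/2$, yet any lift of $\bar c$ generates an infinite cyclic group. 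Thus your $H=\langle H_Z\cup\tilde Y\rangle$ can fail to lie in any bounded power of $A$, and the coset nilprogression you assemble need not satisfy $P\subseteq A^{K^{O_s(1)}}$. Your alternative route via the torsion subgroup $T$ of $\langle A\rangle$ has the analogous defect: nothing in your sketch bounds $T$ inside $A^{K^{O_s(1)}}$ (already for $G=\Z/p$ with $A=\{-1,0,1\}$ one has $T=G\not\subseteq A^{O_K(1)}$). In \cite{nilp.frei} this difficulty is handled by a substantially more involved argument in which the subgroup and progression parts are built and controlled together through the induction, rather than by lifting a pre-formed $\bar H$. A minor further slip: only the weight-$s$ basic commutators in the $\tilde x_i$ land in $Z=\gamma_s(G)$, not those of all weights $\le s$; but this is easily repaired and is not the real obstacle.
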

\begin{proof}This is \cite[Theorem 1.5]{nilp.frei}, except that the bound on the rank of $P$ stated in \cite{nilp.frei} is $K^{O_s(1)}$. The more precise $K^{e^{O(s)}}$ claimed here follows from an inspection of the proof in \cite{nilp.frei}.
\end{proof}

\section{A preliminary structure theorem for nilpotent approximate groups}\label{sec:prelim}
The strategy of Breuillard, Green and Tao's nilpotent Freiman dimension argument \cite{nfdl} is roughly as follows. Given a $K$-approximate subgroup $A$ of a torsion-free nilpotent group $G$, they seek a large piece of $A$ that is nilpotent of bounded step. They first use Lemma \ref{lem:centraliser} to locate an element $\gamma_1\in A$ with a large centraliser; passing to a group of bounded index, they can in fact assume that $\gamma_1$ is central. Writing $H_1$ for the largest cyclic subgroup containing $\gamma_1$, they then pass to the quotient $G/H_1$, which is automatically torsion-free, and repeat, producing a sequence $\gamma_1,\gamma_2,\ldots,\gamma_k$.

Writing $H_i=\langle\gamma_1,\ldots,\gamma_i\rangle$, since $G/H_{i-1}$ is torsion-free and $A$ is finite, each $\gamma_i$ has a power that is not contained in $A^2$ modulo $H_i$. This element $\gamma_i$ therefore contributes to the doubling of $A$ in the sense of Lemma \ref{lem:gleason}, and so that lemma implies that the number of elements $\gamma_i$ this process produces is bounded in terms of $K$. In particular, this process gives a central series of bounded length that contains a large piece of $A$, and this piece is therefore of bounded step.

In the setting of the present paper, the fact that $G$ may have torsion means we cannot assume in the same way that $\gamma_i$ contributes to the doubling of $A$. Indeed, it is possible that $\gamma_i$ generates a subgroup that is entirely contained in $A$ modulo $H_{i-1}$, and hence makes no contribution to the doubling of $A$ in the sense of Lemma \ref{lem:gleason}. We must therefore content ourselves with the following statement.
\begin{prop}\label{prop:dim.lem}
Let $G$ be a nilpotent group and let $A$ be a $K$-approximate subgroup of $G$. Then there exist $k\le K^6$, subgroups $D_1\subseteq\ldots\subseteq D_k\subseteq D_{k+1}$ and $\langle A\rangle=C_0\supseteq C_1\supseteq\ldots\supseteq C_k$ of $G$, and elements $\gamma_1,\ldots,\gamma_k$ such that $\gamma_i$ normalises $D_i$ and such that, writing $H_0=\{1\}$ and $H_i=\langle\gamma_i\rangle D_i$ for $i=1,\ldots,k$, we have that
\begin{enumerate}
\item $D_i\lhd C_{i-1}$;\label{item:dc}
\item $\gamma_i$ is central in $C_i/D_i$; in particular $H_i\lhd C_i$;\label{item:h}
\item $H_{i-1}\subseteq D_i$;\label{item:h.nest}
\item $D_i\subseteq A^2H_{i-1}$;\label{item:dah}
\item $C_i=\langle A^2\cap C_i\rangle H_i$;\label{item:c.gen}
\item $\gamma_i\in A^6\backslash A^2H_{i-1}$;\label{item:gam2}
\item $|A^2\cap C_i|\ge K^{-35i}|A|$;\label{item:size}
\item $C_k=D_{k+1}$.\label{item:cah}
\end{enumerate}
\end{prop}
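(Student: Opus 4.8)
The plan is to build the data $D_i,\gamma_i,C_i$ by recursion on $i$, maintaining properties (1)--(7) for every index constructed so far and stopping the recursion the first time no further step is possible; property (8) is then imposed by fiat, by setting $D_{k+1}=C_k$. This follows the scheme of the nilpotent Freiman dimension lemma of Breuillard--Green--Tao \cite{nfdl}, the one genuinely new ingredient being a device to prevent finite subgroups of $G$ from obstructing a step or from inflating the final count $k$. (As usual we may assume $A$ is symmetric and contains the identity.)

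I would start from the base case $C_0=\langle A\rangle$, $D_1=H_0=\{1\}$, for which the required properties are trivial, and then carry out the recursive step, which is the heart of the matter. Suppose $C_{i-1}$, $H_{i-1}$ and the earlier data have been built with (1)--(7) for the indices below $i$. By Lemma \ref{lem:int.app.grp} the set $B:=A^2\cap C_{i-1}$ is a $K^3$-approximate subgroup of the nilpotent group $C_{i-1}$, and by (5) it generates $C_{i-1}$ together with $H_{i-1}$. Passing to the nilpotent quotient $C_{i-1}/H_{i-1}$ and inspecting a central series of it, one seeks, as in \cite{nfdl}, a genuinely new central direction: an element of a bounded power of $A$ that lies outside $A^2H_{i-1}$. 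The obstruction is a large finite subgroup met generously by $A^2$: it cannot be spent as one of the $\gamma_i$ without forcing $k$ to be large, yet it affords no new direction, since $A$ is already covered by boundedly many of its cosets (compare Corollary \ref{cor:ruz.cov} and Lemma \ref{lem:2.2.ii}). The remedy is to absorb it: one builds a subgroup $D_i$ with $H_{i-1}\subseteq D_i\lhd C_{i-1}$ and $D_i\subseteq A^2H_{i-1}$ sweeping up the problematic finite part, which gives (1), (3), (4). Working now in $C_{i-1}/D_i$ one finds $\gamma_i\in A^6\setminus A^2H_{i-1}$ that, after passing to its centraliser $\tilde C_i/D_i$ there, is central modulo $D_i$; setting $C_i:=\langle A^2\cap\tilde C_i\rangle H_i$ gives (2), (5), (6), while Lemma \ref{lem:centraliser} applied to the $K^3$-approximate group $B$ controls the loss and yields $|A^2\cap C_i|\ge K^{-35}|A^2\cap C_{i-1}|\ge K^{-35i}|A|$, which is (7). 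If at some stage no suitable $\gamma_i$ exists, the recursion halts; one sets $k=i-1$ and $D_{k+1}=C_k$, and since $D_k\subseteq H_k\subseteq C_k$ the nesting $D_{k+1}\supseteq D_k$ holds together with (8).

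It then remains to bound the length $k$ of the recursion. By (3) and $D_i\subseteq H_i=\langle\gamma_i\rangle D_i$ the subgroups $\{1\}=H_0\subseteq H_1\subseteq\cdots\subseteq H_k$ form a nested chain, and by (6) each $\gamma_i$ lies in $A^6\cap H_i$ but not in $A^2H_{i-1}$, so $A^6\cap H_i\not\subseteq A^2H_{i-1}$. Lemma \ref{lem:gleason} with $m=6$ then gives $|A^7|\ge k|A|$, while the $K$-approximate group property gives $|A^7|\le K^6|A|$; hence $k\le K^6$, and in particular the recursion terminates.

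The main obstacle is the construction of $D_i$: one must at once absorb every finite subgroup that $A^2$ meets too generously (so that the Gleason count stays bounded), keep $D_i$ inside $A^2H_{i-1}$ (so that $\gamma_i$ genuinely escapes $A^2H_{i-1}$), and keep $D_i$ normal in $C_{i-1}$ --- the last point being where nilpotence of $C_{i-1}$ and its being generated by $A^2\cap C_{i-1}$ over $H_{i-1}$ are used. Tracking the powers of $A$ and of $K$ through the successive passages to $C_{i-1}/H_{i-1}$, to $C_{i-1}/D_i$ and to the centraliser, so as to arrive at the exponents $6$, $2$ and $35$, is routine but accounts for most of the bookkeeping.
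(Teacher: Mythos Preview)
Your outline follows the paper's approach closely: the same recursion, the same Gleason bound via Lemma~\ref{lem:gleason} with $m=6$, the same centraliser passage via Lemma~\ref{lem:centraliser}, and the same stopping rule with $D_{k+1}=C_k$. You are also right that the construction of $D_i$ is the crux and that this is where the argument of \cite{nfdl} must be modified.

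However, you do not actually supply that construction, and the heuristic you offer --- ``absorb every finite subgroup that $A^2$ meets too generously'' --- does not by itself produce a \emph{normal} subgroup of $C_{i-1}$ contained in $A^2H_{i-1}$, nor does it single out $\gamma_i$. The paper's device is different in flavour and worth knowing. Working in $C_{i-1}/H_{i-1}$ with a central series $Z_1\supseteq Z_2\supseteq\cdots$ and writing $B$ for the image of $A^2\cap C_{i-1}$, one takes $\ell$ \emph{maximal} such that $B^3\cap Z_\ell\setminus B\ne\varnothing$, and defines $D_i$ to be the pullback of $B\cap Z_{\ell+1}$. Normality is then automatic: by maximality of $\ell$, the set $B\cap Z_{\ell+1}$ cannot fail to be a normal subgroup, since otherwise Lemma~\ref{lem:normal.int} (if $B\cap Z$ is not normal in $\langle B\rangle$ then $B^3\cap Z\setminus B\ne\varnothing$) would produce an element at a deeper level. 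Any witness $\omega\in B^3\cap Z_\ell\setminus B$ then lifts to the desired $\gamma_i\in A^6\setminus A^2H_{i-1}$, and a short argument shows $\omega$ lies deep enough that Lemma~\ref{lem:centraliser} applies in $C_{i-1}/D_i$. Note that $B\cap Z_{\ell+1}$ need not be finite; what is being absorbed is whatever portion of the central series $B$ already swallows, finite or not.
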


\begin{figure}[t]
\begin{center}
\begin{tikzpicture}

\draw (0,0) node {$D_{k+1}$};
\draw (1,0) node {$=$};
\draw (2,0) node {$C_k$};
\draw (3,0) node {$\subseteq$};
\draw (4,0) node {$C_{k-1}$};
\draw (5,0) node {$\subseteq$};
\draw (6,0) node {$\cdots$};
\draw (7,0) node {$\subseteq$};
\draw (8,0) node {$C_1$};
\draw (9,0) node {$\subseteq$};
\draw (10.5,0) node {$C_0=\langle A\rangle$};

\draw (0,-2) node {$D_k$};
\draw (0,-4) node {$D_{k-1}$};
\draw (0,-5.9) node {$\vdots$};
\draw (0,-8) node {$D_2$};
\draw (0,-10) node {$D_1$};

\draw (0,-1) node {$H_k$};
\draw (0,-3) node {$H_{k-1}$};
\draw (0,-9) node {$H_1$};
\draw (0,-11) node {$H_0=\{1\}$};

\draw (0,-0.3) -- (0,-0.7);
\draw (0,-1.3) -- (0,-1.7);
\draw (0,-2.3) -- (0,-2.7);
\draw (0,-3.3) -- (0,-3.7);
\draw (0,-4.3) -- (0,-5.6);
\draw (0,-6.4) -- (0,-7.7);
\draw (0,-8.3) -- (0,-8.7);
\draw (0,-9.3) -- (0,-9.7);
\draw (0,-10.3) -- (0,-10.7);

\draw (0.25,-0.85) -- node[below, rotate=35] {$\lhd$} (1.7,-0.15);
\draw (0.25,-2.85) -- node[below right, rotate=40] {$\lhd$} (3.6,-0.15);
\draw (0.25,-8.85) -- node[below right, rotate=45] {$\lhd$} (7.8,-0.15);
\draw (0.2,-10.7) -- node[below right, rotate=45] {$\lhd$} (9.9,-0.15);

\draw (0.25,-1.85) -- node[above left, rotate=35] {$\lhd$} (3.6,-0.15);
\draw (0.25,-7.85) -- node[above left, rotate=45] {$\lhd$} (7.8,-0.15);
\draw (0.25,-9.7) -- node[above left, rotate=45] {$\lhd$} (9.9,-0.15);

\end{tikzpicture}
\caption{Illustration of Proposition \ref{prop:dim.lem}}\label{fig:dim.lem}
\end{center}
\end{figure}
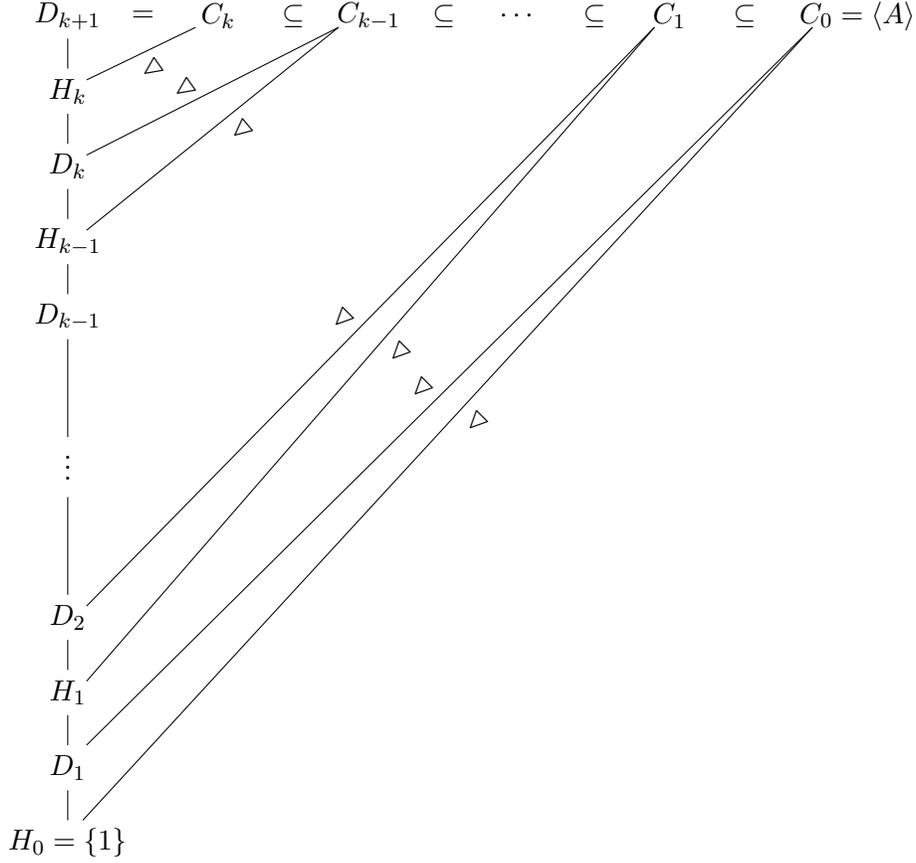

The subgroups and inclusions given by this proposition are illustrated in Figure \ref{fig:dim.lem}. The key output to note is the normal series
\begin{equation}\label{eq:chain}
\{1\}=H_0\lhd D_1\lhd H_1\lhd D_2\lhd\ldots\lhd D_k\lhd H_k\lhd D_{k+1}
\end{equation}
(that this series is normal is not stated explicitly in the proposition, but follows immediately from it). Each subgroup $H_i$ is cyclic and central modulo $D_i$, being generated by the element $\gamma_i$ modulo $D_i$. The groups $H_i$ and the elements $\gamma_i$ are analogous to the groups $H_i$ and elements $\gamma_i$ in the description above of the argument of Breuillard, Green and Tao.

The groups $D_i$, on the other hand, do not feature in the torsion-free setting of Breuillard, Green and Tao; they correspond to the elements, described before Proposition \ref{prop:dim.lem}, that do not contribute to the doubling of $A$ in the torsion setting (note that conclusion \eqref{item:dah} of the proposition implies that $D_i$ is contained in $A^2$ modulo $H_{i-1}$; in particular, $D_i/H_{i-1}$ is a finite group).

We will not ultimately be interested in the subgroups $C_i$, but their presence in the statement of the proposition makes it easier to formulate our inductive proof. The reader may therefore ignore these groups when it comes to applying Proposition \ref{prop:dim.lem} in later sections.

We start our proof of Proposition \ref{prop:dim.lem} with the following lemma.

\begin{lemma}\label{lem:normal.int}
Let $G$ be a group with symmetric generating set $B$, and let $Z$ be a normal subgroup of $G$. Suppose that $B\cap Z$ is not a normal subgroup of $G$. Then $B^3\cap Z\backslash B\ne\varnothing$.
\end{lemma}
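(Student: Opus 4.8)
The plan is to prove the contrapositive: I will assume that $B^3\cap Z\subseteq B$ and deduce that $B\cap Z$ is a normal subgroup of $G$. Throughout I use that $1\in B$; some such assumption is genuinely needed — for instance, taking $G=\Z/4\Z$, $B=\{1,3\}$ and $Z=\{0,2\}$ one checks that $B\cap Z=\varnothing$ while $B^3\cap Z\subseteq B$, so the conclusion fails — but in every application of the lemma the set $B$ will contain the identity (in the proof of Proposition \ref{prop:dim.lem} it is built from a symmetric power of $A$ intersected with a subgroup).

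The first step is to note that, since $1\in B$, the sets $B\cap Z\subseteq B^2\cap Z\subseteq B^3\cap Z$ form an increasing chain, so the standing assumption collapses it to $B\cap Z=B^2\cap Z=B^3\cap Z$. From this it follows immediately that $B\cap Z$ is a subgroup of $G$: it contains $1$, it is symmetric because $B$ and $Z$ both are, and it is closed under multiplication since $x,y\in B\cap Z$ forces $xy\in B^2\cap Z=B\cap Z$.

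It remains to verify normality, and since $B$ is a symmetric generating set for $G$ it is enough to check that $b(B\cap Z)b^{-1}\subseteq B\cap Z$ for each $b\in B$. But if $b\in B$ and $x\in B\cap Z$ then $bxb^{-1}\in B^3$, because $b$ and $b^{-1}$ lie in $B$, and also $bxb^{-1}\in Z$, because $Z\lhd G$; hence $bxb^{-1}\in B^3\cap Z=B\cap Z$, as wanted. Therefore $B\cap Z\lhd G$, contradicting the hypothesis. The whole argument is essentially formal, and I do not expect any real obstacle beyond keeping track of the convention $1\in B$ discussed above.
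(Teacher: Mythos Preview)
Your proof is correct and is essentially the same as the paper's; the paper argues directly, splitting into the two cases ``$B\cap Z$ not a subgroup'' and ``$B\cap Z$ a non-normal subgroup'', but the underlying observations---that products land in $B^2\cap Z$ and $B$-conjugates land in $B^3\cap Z$---are exactly the ones you use in the contrapositive. Your remark that the implicit hypothesis $1\in B$ is needed, together with the counterexample, is accurate and applies equally to the paper's argument.
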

\begin{proof}
If $B\cap Z$ is not a subgroup then we have the stronger statement that $B^2\cap Z\backslash B\ne\varnothing$. If $B\cap Z$ is a non-normal subgroup then there exist $b\in B$ and $x\in B\cap Z$ such that $b^{-1}xb\notin B\cap Z$. Since $b^{-1}xb\in B^3\cap Z$, the lemma is proved.
\end{proof}

\begin{proof}[Proof of Proposition \ref{prop:dim.lem}]
Noting that $C_0=\langle A\rangle$ and $H_0=\{1\}$ always satisfiy conditions \eqref{item:h}, \eqref{item:c.gen} and \eqref{item:size} of the proposition for $i=0$, we show that if subgroups $C_0,\ldots,C_j$ and $D_1,\ldots,D_j$ and elements $\gamma_1,\ldots,\gamma_j$ exist and satisfy the first seven conditions of the proposition for $i=0,\ldots,j$, and if $C_j\not\subseteq A^2H_j$, then we can construct $C_{j+1},D_{j+1},\gamma_{j+1}$ that satisfy the first seven conditions for $i=j+1$. On the other hand, if we do have $C_j\subseteq A^2H_j$ then we stop and take $k=j$ and $D_{j+1}=C_j$ so that conditions \eqref{item:dc}, \eqref{item:h.nest}, \eqref{item:dah} and \eqref{item:cah} are satisfied. Note that if $j>K^6$ then property \eqref{item:gam2} would contradict Lemma \ref{lem:gleason}, so the process must terminate with $k\le K^6$.

Write $\pi_j$ for the projection homomorphism $\pi_j:C_j\to C_j/H_j$, and set $B_j=\pi_j(A^2\cap C_j)$. Let $C_j/H_j=Z_1\supseteq Z_2\supseteq\ldots$ be a central series for $C_j/H_j$. If $B_j=C_j/H_j$ then we have $C_j\subseteq A^2H_j$, and the process stops; in particular, if $C_j\not\subseteq A^2H_j$ then $B_j^2\ne B_j$ by property \eqref{item:c.gen}, and so we may set $\ell$ to be maximal such that there exists $\omega_j\in B_j^3\cap Z_\ell\backslash B_j$. Property \eqref{item:c.gen} for $i=j$, along with Lemma \ref{lem:normal.int} and the maximality of $\ell$, imply that $B_j\cap Z_{\ell+1}$ is a normal subgroup of $C_j/H_j$, and so we may set $D_{j+1}$ to be its pullback $\pi_j^{-1}(B_j\cap Z_{\ell+1})$, which satisfies conditions \eqref{item:dc}, \eqref{item:h.nest} and \eqref{item:dah} for $i=j+1$.

Now write $\rho_j$ for the projection homomorphism $\rho_j:C_j/H_j\to C_j/D_{j+1}$. Note that $\rho_j(\omega_j)\ne1$, and moreover that this implies that $\rho_j(B_j)\ne\{1\}$ (since $\rho_j(B_j)$ generates $C_j/D_{j+1}$ by the $i=j$ case of property \eqref{item:c.gen}). We may therefore let $n$ be maximal such that $\rho_j(B_j^2)\cap\rho_j(Z_n)\ne\{1\}$. This implies that there exist $b\in B_j^2$ and $z\in Z_n$ such that $\rho_j(b)=\rho_j(z)\ne1$, and in particular that there exists $h\in B_j\cap Z_{\ell+1}$ such that $z=bh$.

We conclude that $z\in B_j^3\cap Z_n$, and so since $\rho_j(z)\ne1$ we have $z\in (B_j^3\cap Z_n)\backslash(B_j\cap Z_{\ell+1})$. Thus $n\le\ell$ by definition of $\ell$. In particular, $\rho_j(\omega_j)\in\rho_j(B_j^3)\cap\rho_j(Z_n)$, and so Lemmas \ref{lem:int.app.grp} and \ref{lem:centraliser} imply that
\[
|\rho_j(B_j^2)\cap C_{C_j/D_{j+1}}(\rho_j(\omega_j))|\ge\frac{|\rho_j(B_j)|}{K^{24}}.
\]
Defining $G_{j+1}=\pi_j^{-1}\circ\rho_j^{-1}(C_{C_j/D_{j+1}}(\rho_j(\omega_j)))$, we therefore have
\begin{equation}\label{eq:size.1}
|\rho_j\circ\pi_j(A^4\cap G_{j+1})|\ge\frac{|\rho_j(B_j)|}{K^{24}}.
\end{equation}
Moreover, following \cite{nfdl}, we have
\begin{align*}
K^{11}|A^2\cap G_{j+1}| &\ge |A^{12}\cap G_{j+1}|   &\text{(by Lemma \ref{lem:int.app.grp})}\\
    &\ge |(A^4\cap G_{j+1})^3|\\ 
    &\ge |(A^4\cap G_{j+1})^2\cap D_{j+1}||\rho_j\circ\pi_j(A^4\cap G_{j+1})|   &\text{(by Lemma \ref{lem:2.2.ii})}.
\end{align*}
Since $(A^4\cap G_{j+1})^2\cap D_{j+1}\supseteq A^4\cap D_{j+1}$, we conclude that
\[
|A^2\cap G_{j+1}|\ge K^{-11}|A^4\cap D_{j+1}||\rho_j\circ\pi_j(A^4\cap G_{j+1})|,
\]
which combines with \eqref{eq:size.1} to imply that
\begin{align*}
|A^2\cap G_{j+1}| &\ge K^{-35}|A^4\cap D_{j+1}||\rho_j(B_j)|\\
    &=K^{-35}|A^4\cap D_{j+1}||\rho_j\circ\pi_j(A^2\cap C_j)|\\
    &\ge K^{-35}|(A^2\cap C_j)^2\cap D_{j+1}||\rho_j\circ\pi_j(A^2\cap C_j)|\\
    &\ge K^{-35}|A^2\cap C_j| &\text{(by Lemma \ref{lem:2.2.ii})}\\
    &\ge K^{-35(j+1)}|A| &\text{(by property \eqref{item:size} for $i=j$).}
\end{align*}
Pick an arbitrary element $\gamma_{j+1}\in A^6\cap C_j$ such that $\pi_j(\gamma_{j+1})=\omega_j$, and note that $\gamma_{j+1}$ satisfies \eqref{item:gam2} for $i=j+1$ and, being contained in $C_j$, normalises $D_{j+1}$. Finally, define $C_{j+1}=\langle A^2\cap G_{j+1}\rangle H_{j+1}$, noting that this satisfies \eqref{item:c.gen} and \eqref{item:size} for $i=j+1$. Moreover, (the image of) $\gamma_{j+1}$ is central in $G_{j+1}/D_{j+i}$ by definition, and so in particular it is central in $C_{j+1}/D_{j+i}$, and so \eqref{item:h} is satisfied for $i=j+1$.
\end{proof}

\begin{proof}[Proof of Theorem \ref{thm:pgrp} (nilpotent case)]
Apply Proposition \ref{prop:dim.lem}, noting that property \eqref{item:gam2} implies that $\gamma_i\in A^6$, and hence that $\langle\gamma_i\rangle\subseteq A^{3r}$. It then follows from repeated application of property \eqref{item:dah} that $H_k\subseteq A^{(3r+2)K^6}$, and hence from property \eqref{item:cah} that $C_k\subseteq A^{(3r+2)K^6+2}$. The result then follows from property \eqref{item:size} for $i=k$, and Corollary \ref{cor:ruz.cov}.
\end{proof}

\section{Central extensions of nilpotent approximate groups}\label{sec:central}
Theorem \ref{thm:nag} requires us to exhibit a group $H\subseteq A^{O_K(1)}$ and a group $C$ such that $H\lhd C$ and $C/H$ is nilpotent of bounded step. So far, we have succeeded only in producing the chain \eqref{eq:chain} given by Proposition \ref{prop:dim.lem}, which in fact consists of several quotients $D_i/D_{i-1}$ that are in some sense the opposite of what we are looking for: $H_{i-1}$ is nilpotent of bounded step (indeed, abelian) in the quotient $D_i/D_{i-1}$, whilst $D_i/H_{i-1}$ is finite.

We have $(A^2\cap D_i)H_{i-1}=D_i$ by conclusions \eqref{item:h.nest} and \eqref{item:dah} of Proposition \ref{prop:dim.lem}, and the group $H_{i-1}$ is central in the quotient $D_i/D_{i-1}$ by conclusion \eqref{item:h}. The quotient $D_i/D_{i-1}$ may therefore be thought of as a `central extension' of the approximate group $A^2\cap D_i$. In this section we describe the structure of such central extensions of nilpotent approximate groups, as follows.
\begin{prop}\label{prop:strong.dim.lem}
Let $G$ be a finitely generated nilpotent group, and let $A$ be a $K$-approximate subgroup such that $G=A\cdot Z(G)$. Then there exist $k\le K^8$, and normal subgroups $\{1\}=H_0\subseteq H_1\subseteq\ldots\subseteq H_k\subseteq[G,G]$ of $G$ such that $H_i\subseteq A^8H_{i-1}$, and such that $[G,G]\subseteq A^4H_k$. In particular, $[G,G]\subseteq A^{8K^8+4}$.
\end{prop}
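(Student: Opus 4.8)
The plan is to mimic the iterative construction from Proposition \ref{prop:dim.lem}, but now exploiting the additional hypothesis $G = A\cdot Z(G)$ to force the chain of subgroups to live inside $[G,G]$ rather than inside all of $G$. The key observation is that under this hypothesis, the commutator subgroup $[G,G]$ is generated by commutators $[a,a']$ with $a,a'\in A$ (since any element of $G$ is $a z$ with $z$ central, and $[az, a'z'] = [a,a']$), so in particular $[G,G]\cap A^{O(1)}$ already generates $[G,G]$ and we have a good foothold. I would run a Gleason-type descent: set $C_0 = [G,G]$ (or $\langle A^2\cap [G,G]\rangle$), and at each stage, working in the quotient by the part $H_j$ already absorbed, pick a central series of $C_j/H_j$ and find an element $\omega_j$ in $\pi_j(A^{O(1)}\cap C_j)^{O(1)}$ that lies deep in the central series but outside $\pi_j(A^2\cap C_j)$ (possible precisely when $C_j\not\subseteq A^{O(1)}H_j$, using the generation property). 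Then take $H_{j+1}$ to be the pullback of the subgroup generated by $\omega_j$ together with an appropriate centraliser-type term, exactly as in Section \ref{sec:prelim}, ensuring $H_{j+1}\subseteq A^8 H_j$ and $H_{j+1}\lhd G$ (normality in all of $G$, not just in $C_j$, should come for free here because $[G,G]$ is normal and the relevant subgroups are characteristic-ish in the central-series filtration, or can be arranged to be normalised by $A$ and by $Z(G)$, hence by $G$).

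The size bookkeeping proceeds as in Proposition \ref{prop:dim.lem}: each step loses a factor $K^{O(1)}$ in the quantity $|A^{O(1)}\cap C_j|$, via Lemmas \ref{lem:int.app.grp}, \ref{lem:2.2.ii} and \ref{lem:centraliser}, so by Lemma \ref{lem:gleason} (applied to the nested chain $H_0\subseteq H_1\subseteq\cdots$, each $H_i$ meeting $A^{O(1)}$ outside $A^2 H_{i-1}$) the process terminates after $k\le K^8$ steps. When it terminates we have $C_k\subseteq A^4 H_k$, and since $C_k = \langle A^2\cap C_k\rangle H_k$ and $C_k$ surjects onto $[G,G]/H_k$ — indeed I want to arrange $C_k H_{\mathrm{rest}} = [G,G]$ throughout, or simply take $C_0=[G,G]$ so that $C_j\supseteq$ the relevant part and the final $C_k$ together with the absorbed $H_k$ recovers $[G,G]$ — we conclude $[G,G]\subseteq A^4 H_k$. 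Telescoping $H_i\subseteq A^8 H_{i-1}$ gives $H_k\subseteq A^{8K^8}$, hence $[G,G]\subseteq A^{4+8K^8}\subseteq A^{K^{O(1)}}$, which is the final claim.

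The main obstacle I anticipate is getting \emph{full} normality of the $H_i$ in $G$ (not merely in the ambient $C_{i-1}$, which was all that Proposition \ref{prop:dim.lem} delivered). In the proof of Proposition \ref{prop:dim.lem} the subgroups $D_i$ are only normal in $C_{i-1}$, and one pays for normalisation by a $\gamma_i$ with finite-order issues; here, because $A$ generates $G$ modulo the centre and the whole construction takes place inside the normal subgroup $[G,G]$, I expect the centraliser and central-series ingredients to be automatically $A$-invariant (conjugation by $A$ permutes the terms of a suitably chosen $G$-central series of $[G,G]$, e.g.\ the lower central series), and $Z(G)$-invariance is trivial, so $G$-normality follows. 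Verifying this invariance carefully — in particular choosing the central series of $C_j/H_j$ to be induced by a $G$-central series of $[G,G]$, and checking that the pullback $D_{j+1}$ and the element $\omega_j$ can be taken $A$-equivariantly — is the one place where the argument genuinely differs from Section \ref{sec:prelim} and will need care. A secondary, more routine nuisance is tracking the exact exponents (the $A^8$, $A^4$, and the $K^8$ bound on $k$), which will follow from the same Ruzsa-covering estimates as before but with slightly different constants because we now work with $A^4$ and $A^6$ intersections rather than $A^2$.
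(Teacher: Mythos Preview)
Your plan to rerun the Section~\ref{sec:prelim} machinery inside $[G,G]$ does not deliver the crucial containment $H_{j+1}\subseteq A^8H_j$. In Proposition~\ref{prop:dim.lem} the subgroups produced are $H_i=\langle\gamma_i\rangle D_i$ with $\gamma_i\in A^6$ and $D_i\subseteq A^2H_{i-1}$; the cyclic factor $\langle\gamma_i\rangle$ is in general infinite and is \emph{not} contained in any bounded power of $A$. That is precisely why Theorem~\ref{thm:pgrp} needs a torsion hypothesis and why Proposition~\ref{prop:nag} has to do further work to turn the output of Proposition~\ref{prop:dim.lem} into a bounded normal subgroup. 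Your phrase ``exactly as in Section~\ref{sec:prelim}, ensuring $H_{j+1}\subseteq A^8H_j$'' is therefore asserting something that Section~\ref{sec:prelim} does not provide, and the centraliser/size bookkeeping you describe gives control on $|A^2\cap C_j|$ but not on containment of the $H_j$ themselves in small powers of $A$.

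The paper's argument is both different and considerably simpler, and avoids centralisers entirely. The two ingredients you are missing are: first, that under $G=A\cdot Z(G)$ every commutator in $G$ (not just those of elements of $A$) lies in $A^4$, since $[az,a'z']=[a,a']$; and second, that the identity $[a,b]^\ell\equiv[a,b^\ell]$ modulo deeper basic-commutator terms lets one absorb arbitrary powers of a basic commutator into a single commutator, hence into $A^4$. Together these give a fixed $G$-central series $[G,G]=\Gamma_1\supseteq\ldots\supseteq\Gamma_{r+1}=\{1\}$ with $\Gamma_i\subseteq A^4\Gamma_{i+1}$ for every $i$. One then simply selects a subsequence $\Gamma_{j(0)}\supset\Gamma_{j(1)}\supset\ldots$ along which each term genuinely escapes $A^4$ times the previous (this is an elementary pigeonhole, not a centraliser argument), and Lemma~\ref{lem:gleason} bounds the length by $K^8$. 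The $H_i$ are just these $\Gamma_{j(i)}$, so normality in $G$---the issue you flagged as the main obstacle---is automatic, while the containment $H_i\subseteq A^8H_{i-1}$ comes directly from the construction rather than from any size estimate.
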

\begin{remark}
Essentially the same argument shows that if $G=G_1\supseteq\ldots\supseteq G_s\supseteq G_{s+1}=\{1\}$ is the lower central series for $G$ and $G=AG_{s+1-i}$ then $G_{i+1}\subseteq A^{K^{O_i(1)}}$. We leave the details to the reader.
\end{remark}
Throughout this section, $G$ is a finitely generated nilpotent group and $A$ is a $K$-approximate subgroup such that $G=A\cdot Z(G)$, as in Proposition \ref{prop:strong.dim.lem}.

Given elements $a,b\in G$ we define, as usual, the \emph{commutator} $[a,b]$ by $[a,b]=a^{-1}b^{-1}ab$. It is well known (see \cite[\S11.1]{hall}, for example) that there exists a finite set $c_1,\ldots,c_r$ of commutators, called \emph{basic commutators}, such that the series $\{1\}=\Gamma_0\subseteq\Gamma_1\subseteq\ldots\subseteq\Gamma_r$ formed by taking $\Gamma_i=\langle c_1,\ldots,c_i\rangle$ is a central series with $[G,G]=\Gamma_r$, and such that every $x\in\Gamma_i$ can be expressed in the form $x=c_i^{\ell_i}\cdots c_1^{\ell_1}$, with $\ell_i\in\Z$ of course depending on $x$. Let these commutators $c_i$ and subgroups $\Gamma_i$ be fixed from now on.

\begin{lemma}\label{lem:coms.in.A}
The set of commutators in $G$ is contained in $A^4$.
\end{lemma}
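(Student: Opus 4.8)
The plan is to use the hypothesis $G = A\cdot Z(G)$ to write any commutator $[g,g']$ in terms of elements of $A$. Since $G = AZ(G)$, we may write $g = az$ and $g' = a'z'$ with $a,a'\in A$ and $z,z'\in Z(G)$. Because $z$ and $z'$ are central, they contribute nothing to the commutator: $[g,g'] = [az,a'z'] = [a,a']$. Thus every commutator in $G$ is already of the form $[a,a']$ with $a,a'\in A$, and since $A$ is symmetric (as an approximate group) we have $[a,a'] = a^{-1}a'^{-1}aa' \in A^4$.

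First I would recall that the definition of $K$-approximate subgroup in this paper includes finiteness but, as is standard and as used throughout Section~\ref{sec:app.grps} (e.g.\ in Lemma~\ref{lem:gleason} and the results invoking symmetry), one takes $A$ to be symmetric and to contain the identity; in particular $A^{-1}=A$, so $a^{-1},a'^{-1}\in A$. Then I would carry out the short computation above: fix a commutator $[g,g']$, use $G=AZ(G)$ to write $g\in aZ(G)$, $g'\in a'Z(G)$ with $a,a'\in A$, observe that conjugation by a central element is trivial so $[g,g']=[a,a']$, and finally expand $[a,a']=a^{-1}a'^{-1}aa'$ to land in $A\cdot A\cdot A\cdot A=A^4$.

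There is essentially no obstacle here; the only point requiring a moment's care is the bookkeeping convention for $[g,g']$ (whether one uses $g^{-1}g'^{-1}gg'$ or $gg'g^{-1}g'^{-1}$, and left- versus right-handed commutator notation), but either convention gives a word of length four in $\{a^{\pm1},a'^{\pm1}\}$ and hence an element of $A^4$ since $A$ is symmetric. It is also worth noting explicitly that this lemma does not yet use the approximate-group structure of $A$ beyond symmetry — the doubling hypothesis will only enter later in the proof of Proposition~\ref{prop:strong.dim.lem} when one needs to control how many subgroups $H_i$ are required and to bound $|A^2\cap H_i|$ from below via Corollary~\ref{cor:ruz.cov}. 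The statement as given is purely a consequence of the central-extension hypothesis $G=A\cdot Z(G)$.
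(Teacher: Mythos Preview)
Your proof is correct and is essentially the same as the paper's: both use $G=A\cdot Z(G)$ to replace an arbitrary commutator $[g,g']$ by $[a,a']$ with $a,a'\in A$ (the paper phrases this via the projection $\pi:G\to G/Z(G)$ and the observation that $[a,b]$ depends only on $\pi(a),\pi(b)$), and then note that $[a,a']\in A^4$ by symmetry of $A$. Your remark that only symmetry of $A$ is used here, not the doubling bound, is also accurate.
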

\begin{proof}
Write $\pi:G\to G/Z(G)$ for the projection homomorphism. The commutator $[a,b]$ depends only on $\pi(a)$ and $\pi(b)$. Since $\pi(A)=G/Z(G)$, there exist $a',b'\in A$ such that $\pi(a')=\pi(a)$ and $\pi(b')=\pi(b)$, and so $[a,b]=[a',b']\in A^4$, as desired.
\end{proof}

\begin{lemma}\label{lem:A4.gens}
We have $\Gamma_i\subseteq A^4\Gamma_{i-1}$ for each $i=1,\ldots,r$.
\end{lemma}
\begin{proof}
Writing $c_i=[a_i,b_i]$, it follows from the easily verified identity $[x,yz]=[x,z]z^{-1}[x,y]z$ that $[a_i,b_i]^{\ell_i}\in[a_i,b_i^{\ell_i}]\Gamma_{i-1}$. The desired result therefore follows from Lemma \ref{lem:coms.in.A}.
\end{proof}

\begin{lemma}\label{lem:dim.lem.full.quot}
Let $j\in\{0,\ldots,r-1\}$. Then there exists $j'>j$ such that $\Gamma_{j'}\subseteq A^4\Gamma_j$, and such that either $j'=r$ or $\Gamma_{j'+1}=A^8\Gamma_j\cap\Gamma_{j'+1}\not\subseteq A^4\Gamma_j$.
\end{lemma}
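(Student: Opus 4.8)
The plan is to run a descending search through the central series $\Gamma_1 \supseteq \cdots \supseteq \Gamma_{r+1}$, starting from index $j$ and stepping down one term at a time, stopping as soon as we can no longer absorb the next term into $A^4\Gamma_j$. Concretely, set $j_0 = j$ and, having chosen $j_t < j$ with $\Gamma_{j_t} \subseteq A^4\Gamma_j$, examine $\Gamma_{j_t - 1}$. If $j_t = 1$ we stop and output $j' = 1$; this indeed satisfies the conclusion (the first alternative). If $j_t > 1$ and $\Gamma_{j_t-1} \subseteq A^8\Gamma_j$, then since $\Gamma_{j_t-1} \subseteq \Gamma_{j_t-1}$ trivially, we have $\Gamma_{j_t-1} = A^8\Gamma_j \cap \Gamma_{j_t-1}$; if moreover $\Gamma_{j_t-1} \not\subseteq A^4\Gamma_j$ we stop and output $j' = j_t$, which gives the second alternative. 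The only way the search continues is if $\Gamma_{j_t-1} \subseteq A^4\Gamma_j$, in which case we set $j_{t+1} = j_t - 1$ and repeat. Since the index strictly decreases and is bounded below by $1$, the process terminates.

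The one case not yet covered by the stopping rule is: $j_t > 1$, $\Gamma_{j_t - 1} \not\subseteq A^4\Gamma_j$, but also $\Gamma_{j_t-1} \not\subseteq A^8\Gamma_j$ — here neither "continue" nor "stop with the stated conclusion" is available as phrased. The resolution is that this case cannot occur: by Lemma \ref{lem:A4.gens} we have $\Gamma_{j_t-1} \subseteq A^4\Gamma_{j_t}$, and by the inductive hypothesis of the search $\Gamma_{j_t} \subseteq A^4\Gamma_j$, so $\Gamma_{j_t-1} \subseteq A^4 A^4 \Gamma_j = A^8\Gamma_j$ (using that $\Gamma_j$ is a subgroup, so $A^4\Gamma_j$ absorbs left multiplication by $A^4$ to give $A^8\Gamma_j$). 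Hence in fact $\Gamma_{j_t-1} \subseteq A^8\Gamma_j$ always holds automatically at every step, so the genuine dichotomy at each step is exactly "$\Gamma_{j_t-1} \subseteq A^4\Gamma_j$, continue" versus "$\Gamma_{j_t-1} \not\subseteq A^4\Gamma_j$, stop with $j' = j_t$ and the second alternative", and the remaining terminal case is $j_t = 1$ giving the first alternative.

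I expect the main thing to get right is this bookkeeping: making sure that at termination the output index $j'$ satisfies $\Gamma_{j'} \subseteq A^4\Gamma_j$ (which holds because $j'$ is always the last index we successfully absorbed, i.e. $j' = j_t$ with $\Gamma_{j_t}\subseteq A^4\Gamma_j$ by construction, or $j'=1$ with $\Gamma_1 = [G,G] \subseteq A^4\Gamma_j$ — note $\Gamma_1 \subseteq A^4\Gamma_2 \subseteq \cdots$, but more directly, reaching $j_t = 1$ means $\Gamma_1 \subseteq A^4\Gamma_j$ was already established at the previous step), and simultaneously that the "stop" condition records the correct index $j'-1 = j_t - 1$ with the failure of the $A^4\Gamma_j$-containment and the automatic $A^8\Gamma_j$-containment. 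There is no real analytic difficulty here — the inequalities are all one-line inclusions; the only subtlety is the observation that $A^8\Gamma_j \cap \Gamma_{j'-1} = \Gamma_{j'-1}$ is just the statement $\Gamma_{j'-1}\subseteq A^8\Gamma_j$, which we already have for free from Lemma \ref{lem:A4.gens} composed with the running hypothesis. So the proof is essentially: "descend along the series using Lemma \ref{lem:A4.gens} to control each step, stop at the first failure to stay inside $A^4\Gamma_j$."
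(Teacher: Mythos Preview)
Your proof is correct and amounts to the same argument as the paper's: the paper defines $j'$ in one shot as the minimal index with $(A^4\cap\Gamma_{j'})\Gamma_j$ a group, which (since $A^4\cap\Gamma_i$ generates $\Gamma_i$) is exactly your condition $\Gamma_{j'}\subseteq A^4\Gamma_j$, and then derives the non-containment at $j'-1$ from the failure of that set to be a group. Your step-by-step descent recovers the same $j'$, and your derivation of $\Gamma_{j'-1}\subseteq A^8\Gamma_j$ from Lemma~\ref{lem:A4.gens} composed with the running hypothesis is, if anything, slightly more direct than the paper's route through $(A^8\cap\Gamma_{j'-1})\Gamma_j\not\subseteq(A^4\cap\Gamma_{j'-1})\Gamma_j$.
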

\begin{proof}
Let $j'\le r$ be maximal such that $(A^4\cap\Gamma_{j'})\Gamma_j$ is a group, noting that $j'>j$ by Lemma \ref{lem:A4.gens}. Lemma \ref{lem:A4.gens} implies that $(A^4\cap\Gamma_{j'})$ generates $\Gamma_{j'}$, so in fact we have $(A^4\cap\Gamma_{j'})\Gamma_j=\Gamma_{j'}$; in particular, $\Gamma_{j'}\subseteq A^4\Gamma_j$, as required. If $j'\ne r$ then $(A^4\cap\Gamma_{j'+1})\Gamma_j$ is not a group by definition of $j'$, and in particular we have $(A^8\cap\Gamma_{j'+1})\Gamma_j\not\subseteq(A^4\cap\Gamma_{j'+1})\Gamma_j$, and hence $A^8\Gamma_j\cap\Gamma_{j'+1}\not\subseteq A^4\Gamma_j$. However, $\Gamma_{j'+1}=A^8\Gamma_j\cap\Gamma_{j'+1}$ by Lemma \ref{lem:A4.gens}.
\end{proof}
\begin{proof}[Proof of Proposition \ref{prop:strong.dim.lem}]
It follows from repeated application of Lemma \ref{lem:dim.lem.full.quot} that there exist $k\in\Z$ and $0=j(0)<j(1)<\ldots<j(k)$ such that $\Gamma_{j(i)}=A^8\Gamma_{j(i-1)}\cap\Gamma_{j(i)}\not\subseteq A^4\Gamma_{j(i-1)}$ for each $i$, and such that $[G,G]\subseteq A^4\Gamma_{j(k)}$. Lemma \ref{lem:gleason} implies that $k\le K^8$, and so we may take $H_i=\Gamma_{j(i)}$ in Proposition \ref{prop:strong.dim.lem}.
\end{proof}

\section{Nilpotent groups}\label{sec:conclusion}
In this section we prove Theorem \ref{thm:nag} under the assumption that $G$ is nilpotent. In fact, we prove the following slightly more detailed result, which includes some additional conclusions that are of use when generalising to the residually nilpotent setting.
\begin{prop}[nilpotent case of Theorem \ref{thm:nag}]\label{prop:nag}
Let $G$ be a nilpotent group and suppose that $A$ is a $K$-approximate subgroup of $G$. Then there exist subgroups $H\lhd C\subseteq G$ such that
\begin{enumerate}[label=(\alph*)]
\item\label{item:H} $H\subseteq A^{O_K(1)}$;
\item\label{item:C/H} $C/H$ is nilpotent of step at most $K^6$;
\item\label{item:Cgen} $C$ is generated by $A^6\cap C$;
\item\label{item:size'} $|A^2\cap C|\ge\exp(-K^{O(1)})|A|$;
\item\label{item:cover} $A$ can be covered by $\exp(K^{O(1)})$ left cosets of $C$.
\end{enumerate}
\end{prop}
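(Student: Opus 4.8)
The plan is to read off everything except the nilpotent quotient directly from Proposition \ref{prop:dim.lem}, and then to produce that quotient by feeding the $A$-controlled ``finite errors'' left over by that proposition into Proposition \ref{prop:strong.dim.lem}.

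First I would apply Proposition \ref{prop:dim.lem} (after replacing $G$ by the finitely generated nilpotent group $\langle A\rangle$; a merely residually nilpotent $G$ is reduced to this case by passing to a nilpotent quotient of $\langle A\rangle$ on which a large enough, but $K$-dependent, power of $A$ is faithful). Set $C:=C_k$, so that $k\le K^6$. Conclusions \ref{item:Cgen}, \ref{item:size'} and \ref{item:cover} then fall out with little work. For \ref{item:Cgen} one unwinds $C_k=\langle A^2\cap C_k\rangle H_k$, $H_i=\langle\gamma_i\rangle D_i$ and $D_i\subseteq A^2H_{i-1}$; using $H_{i-1}\subseteq D_i\subseteq\ldots\subseteq D_k\subseteq C_k$ one checks that every factor that appears already lies in $C_k$, whence $C=\langle A^2\cap C,\gamma_1,\ldots,\gamma_k\rangle=\langle A^6\cap C\rangle$ (using \eqref{item:gam2} to see $\gamma_i\in A^6$). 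Conclusion \ref{item:size'} is property \eqref{item:size} for $i=k$ together with $k\le K^6$, since $K^{-35k}\ge\exp(-K^{O(1)})$; and \ref{item:cover} follows from Corollary \ref{cor:ruz.cov} applied with this $C$ and $K'=K^{35k}=\exp(K^{O(1)})$.

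The substance is in \ref{item:H} and \ref{item:C/H}: a normal $H\le C$ with $H\subseteq A^{O_K(1)}$ and $C/H$ nilpotent of step $\le K^6$. The chain $\{1\}=H_0\le\ldots\le H_k$ is normal in $C$ (as $C_k\subseteq C_i$ for all $i$), satisfies $[C,H_i]\subseteq D_i$ (since $H_i/D_i$ is central in $C/D_i$), and $C\subseteq A^2H_k$ (the stopping condition). If every ``error'' $D_i/H_{i-1}$ were trivial this would exhibit $C$ as nilpotent of step $\le k$ modulo the finite group $C/H_k$, and one would simply take $H$ to be a suitable term of the lower central series of $C$ below $H_k$: such an $H$ is generated by commutators of elements of a bounded power of $A$, and, the $\gamma_i$ being central modulo lower terms (hence contributing only ``shallowly'' to iterated commutators), it is contained in $A^{O_K(1)}$. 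In general the $D_i/H_{i-1}$ and $C/H_k$ are finite groups that are nonetheless covered by a bounded power of $A$, and this is exactly the input for Proposition \ref{prop:strong.dim.lem}. For each $i$ I would verify that the relevant subquotient is an approximate central extension with $K^{O(1)}$-bounded parameters — the induced set $A^{O(1)}\cap(\,\cdot\,)$ being a $K^{O(1)}$-approximate subgroup by Lemmas \ref{lem:int.app.grp} and \ref{lem:2.2.ii} which surjects onto the appropriate quotient by the centre by the choices made in the proof of Proposition \ref{prop:dim.lem} — and then split each error into $K^{O(1)}$ further normal pieces, each inside a bounded power of $A$, via Proposition \ref{prop:strong.dim.lem}. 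Assembling all of these pieces (and whatever part of $A^2\cap C$ bridges $C$ over $H_k$) into a single normal subgroup $H$, one is left with $C/H$ carrying an honest central series of length $\le k\le K^6$ coming from the images of the $H_i$, so $C/H$ is nilpotent of step $\le K^6$; and $H$, built from $\le k$ outer layers each refined by $K^{O(1)}$ inner layers of Proposition \ref{prop:strong.dim.lem} and composed through the elementary fact that $X\subseteq A^aY$ and $Y\subseteq A^bZ$ give $X\subseteq A^{a+b}Z$, lies in $A^N$ with $N$ depending only on $K$.

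The step I expect to be the main obstacle is precisely this construction of $H$: it must be chosen so that the two competing requirements — $H$ sitting inside a $K$-bounded power of $A$, and $C/H$ having step controlled purely by $k$ — hold simultaneously. Concretely, every refinement produced by Proposition \ref{prop:strong.dim.lem} has to be placed strictly below $H$, so that it does not inflate the nilpotency class of $C/H$, while every generator thrown into $H$ must be certified, layer by layer, to lie in a controlled power of $A$; and at each of the $k$ layers one has to check that the ambient subquotient genuinely has the form $G=A\cdot Z(G)$ demanded by Proposition \ref{prop:strong.dim.lem}, with all approximate-group constants degrading only by $K$-bounded amounts. Tracking the powers of $A$ through this nested induction is what yields the effective, $K$-only bound in \ref{item:H} — a tower of exponentials of height $K^{O(1)}$, the tower arising because the approximate-group constant one must feed into Proposition \ref{prop:strong.dim.lem} at the $i$-th layer is itself governed by the power of $A$ in which that layer already sits.
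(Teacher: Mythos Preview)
Your overall plan matches the paper's: take $C=C_k=D_{k+1}$ from Proposition \ref{prop:dim.lem}, read off \ref{item:Cgen}, \ref{item:size'}, \ref{item:cover} as you describe, and then build $H$ by a downward layer-by-layer induction using Proposition \ref{prop:strong.dim.lem}. You also correctly anticipate the tower-of-exponentials degradation.

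There is, however, a real gap in the construction of $H$. You want the images of the $H_i$ to form a central series in $C/H$, i.e.\ $[C,H_i]\subseteq H_{i-1}H$. From Proposition \ref{prop:dim.lem} you only know $[C,H_i]\subseteq D_i$, and while $D_i\subseteq A^2H_{i-1}$, this does not place $D_i$ inside $H_{i-1}H$ for any fixed $H$ built from Proposition \ref{prop:strong.dim.lem} alone. The point is that Proposition \ref{prop:strong.dim.lem}, applied to a subquotient $\overline D_{i+1}/D_i$ satisfying $G=A\cdot Z(G)$, controls only $[\overline D_{i+1},\overline D_{i+1}]$; it says nothing about $[C,\overline D_{i+1}]$, which is what centrality in $C/\overline D_i$ demands. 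The pieces it produces are normal in the subquotient, not \emph{a priori} in $C$.

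The paper closes this gap with two ingredients you do not invoke. First, having made $\overline D_{i+1}$ abelian modulo $D_i'=[\overline D_{i+1},\overline D_{i+1}]D_i$ via Proposition \ref{prop:strong.dim.lem}, it applies Guralnick's lemma (Lemma \ref{lem:guralnick}) to write every element of $[C,\overline D_{i+1}]$ modulo $D_i'$ as a product of $r(i+1)$ commutators $[x_j,d_j]$ with the $x_j$ a \emph{fixed finite} list of generators of $C$; each such commutator then lies in a bounded power of $A$ because $d_j$ may be replaced by an element of $A^m$ modulo the central factor $Z_iD_i$. Second, to keep $r(i)$ finite and $K$-bounded as the induction descends, it applies the Green--Ruzsa abelian Freiman theorem (Theorem \ref{thm:ab.frei}) to the now abelian, central image of $A^m\cap\overline D_{i+1}$ in $C/D_i''$, extracting boundedly many new generators $y_1,\ldots,y_n$ and a genuine finite subgroup $B$; one then sets $\overline D_i=BD_i''$. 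Without a bounded generating set you cannot run Guralnick at the next step, and without Guralnick you cannot promote ``$\overline D_{i+1}$ abelian'' to ``$\overline D_{i+1}$ central in $C$''. These two tools are what make the pieces normal in $C$ and give the genuine central series $(\overline D_i/\overline D_1)$ of length $\le k$; your proposal, as written, produces only normality and centrality inside each layer separately.
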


We use the following special case of a lemma of Guralnick \cite{guralnick}.
\begin{lemma}[{\cite[Lemma 3.1]{guralnick}}]\label{lem:guralnick}
Let $G$ be a group, and let $D$ be an abelian normal subgroup of $G$ such that $G=\langle x_1,\ldots,x_n,D\rangle$. Then $[G,D]=\{\prod_{i=1}^n[x_i,d_i]:d_i\in D\}$.
\end{lemma}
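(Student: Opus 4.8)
The plan is to prove the non-trivial inclusion by showing that the set $B:=\{\prod_{i=1}^n[x_i,d_i]:d_i\in D\}$ is itself a normal subgroup of $G$, and then passing to the quotient $G/B$. The inclusion $B\subseteq[G,D]$ is immediate: each $[x_i,d_i]$ lies in $[G,D]$, and $[G,D]$ is a subgroup of $G$ (in fact normal, since $D\lhd G$), so it contains all products of such commutators. Everything therefore reduces to showing $[G,D]\subseteq B$.

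First I would check that $B$ is a subgroup. Because $D$ is normal we have $[x_i,d]\in D$ for every $d$, and because $D$ is abelian the identity $[x_i,dd']=[x_i,d'][x_i,d]^{d'}=[x_i,d'][x_i,d]$ shows that $d\mapsto[x_i,d]$ is a homomorphism $D\to D$. Hence each $[x_i,D]$ is a subgroup of the abelian group $D$, and $B=[x_1,D][x_2,D]\cdots[x_n,D]$ is a subgroup of $D$ as well.

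The main step — and the one I expect to be the only real obstacle — is to show that $B\lhd G$. Since $B\subseteq D$ and $D$ is abelian, $D$ normalises $B$; as $G=\langle x_1,\dots,x_n,D\rangle$, it suffices to show that each $x_j^{\pm1}$ normalises $B$. For $\beta\in B\subseteq D$ one has $\beta^{x_j}=\beta[\beta,x_j]$, and $[\beta,x_j]=[x_j,\beta]^{-1}$ lies in $[x_j,D]\subseteq B$; similarly $\beta^{x_j^{-1}}=\beta[\beta,x_j^{-1}]$ with $[\beta,x_j^{-1}]=[x_j^{-1},\beta]^{-1}$, and a one-line commutator computation gives $[x_j^{-1},d]=[x_j,d^{x_j^{-1}}]^{-1}\in[x_j,D]$. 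Thus $\beta^{x_j^{\pm1}}\in B$, so $B$ is normal. The point is simply that conjugating an element of $D$ by $x_j$ changes it only by a commutator of the form $[x_j,d']$ with $d'\in D$, which already lies in the $j$-th factor of $B$.

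Finally I would conclude by working in $\bar G:=G/B$. For each $i$ we have $[x_i,d]\in B$ for all $d\in D$, so the image of $x_i$ in $\bar G$ centralises $\bar D:=D/B$; and $\bar D$, being abelian, centralises itself. Since the $x_i$ together with $D$ generate $G$, it follows that $\bar D$ is central in $\bar G$, i.e. $[G,D]\subseteq B$. Combined with $B\subseteq[G,D]$ this proves the lemma.
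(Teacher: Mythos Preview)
Your proof is correct. The argument is clean: you show $B=[x_1,D]\cdots[x_n,D]$ is a subgroup of the abelian group $D$ by observing that each $d\mapsto[x_i,d]$ is a homomorphism (using $D\lhd G$ and $D$ abelian), then check normality of $B$ in $G$ by conjugating by the generators, and finally pass to $G/B$ to see that the image of $D$ is central. All the commutator identities you quote are valid as written.

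There is nothing in the paper to compare against: the paper does not prove this lemma at all but merely quotes it as \cite[Lemma 3.1]{guralnick}, so your proposal supplies a self-contained proof where the paper gives only a reference. Your approach is in fact essentially the standard one (and is the one Guralnick uses): show the set of products is already a normal subgroup and then observe that modulo it the image of $D$ becomes central.
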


\begin{proof}[Proof of Proposition \ref{prop:nag}]
Let $k,D_1,\ldots,D_{k+1},\gamma_1\ldots,\gamma_k$ be as given by Proposition \ref{prop:dim.lem}, write $Z_i=\langle\gamma_i\rangle$, and write $C=D_{k+1}$; the group $C$ acts to some extent as the ambient group in this proof. Note that Proposition \ref{prop:dim.lem} \eqref{item:dah} and \eqref{item:gam2} imply that $C$ is generated by $A^6\cap C$, and so property \ref{item:Cgen} of the present proposition is satisfied.

We have $|A^2\cap C|\ge K^{-35k}|A|$ and $k\le K^6$, and so Corollary \ref{cor:ruz.cov} implies that $A$ is covered by $K^{35K^6+2}$ translates of $C$, and properties \ref{item:size'} and \ref{item:cover} are satisfied. Moreover, for each $i$ we have $D_i$ normal in $C$, and $D_{i+1}\subseteq A^2Z_iD_i$ with $Z_i$ central in $C/D_i$. Finally, $D_1\subseteq A^2$.

We claim that there exist the following.
\begin{enumerate}[label=(\roman*)]
\item\label{item:nilp} Subgroups $\overline D_{k+1}\supseteq\overline D_k\supseteq\ldots\supseteq\overline D_1$, normal in $C$, such that $\overline D_{k+1}=C$ and $D_i\subseteq \overline D_i\subseteq A^{O_{K,(k-i)}(1)}D_i$ otherwise, and such that $\overline D_{i+1}$ is central in $C/\overline D_i$.
\item\label{item:induc} Non-negative integers $r(k)\le\ldots\le r(1)$ such that $r(i)\le O_{K,(k-i)}(1)$, and elements $x_1,\ldots,x_{r(1)}\in C$ such that $x_1,\ldots,x_{r(i)}\in A^{O_{K,(k-i)}(1)}$, and such that $\overline D_{i+1}=\langle x_{r(i+1)+1},\ldots,x_{r(i)},\overline D_i\rangle$.
\end{enumerate}
Note that part \ref{item:nilp} of the claim is enough to prove the proposition, since we may take $H=\overline D_1$, and then $(\overline D_i/\overline D_1)$ is a central series of length at most $K^6+1$ for $C/H$. Part \ref{item:induc} exists only to facilitate an inductive proof of part \ref{item:nilp}.

To prove the claim, we assume that $\overline D_{k+1},\ldots,\overline D_{i+1}$ and $x_1,\ldots,x_{r(i+1)}$ have been constructed and satisfy \ref{item:nilp} and \ref{item:induc}, and then construct $\overline D_i$ and $r(i)$. This assumption implies that there exists $m\le O_{K,(k-i)}(1)$ such that $\overline D_{i+1}=(A^m\cap\overline D_{i+1})Z_iD_i$. Lemma \ref{lem:int.app.grp} implies that  $A^m\cap\overline D_{i+1}$ is an $O_{K,(k-i)}(1)$-approximate group, and so Proposition \ref{prop:strong.dim.lem} applied to $\overline D_{i+1}/D_i$ implies that $[\overline D_{i+1},\overline D_{i+1}]\subseteq A^{O_{K,(k-i)}(1)}D_i$.

Since $\overline D_{i+1}$ is normal in $C$, so is $[\overline D_{i+1},\overline D_{i+1}]$, and so we may define a normal subgroup $D_i'=[\overline D_{i+1},\overline D_{i+1}]D_i$, noting that $D_i'\subseteq A^{O_{K,(k-i)}(1)}D_i$. Since $\overline D_{i+1}$ is abelian in $C/D_i'$, we may apply Lemma \ref{lem:guralnick} in this quotient to conclude that $[C,\overline D_{i+1}]\subseteq\{\prod_{j=1}^{r(i+1)}[x_j,d_j]:d_j\in\overline D_{i+1}\}D_i'$.

Since $Z_i$ is central in $C/D_i$, the image of $[x_j,d_j]$ in $\overline D_{i+1}/D_i$ when $d_j\in\overline D_{i+1}$ depends only on the image of $d_j$ in the quotient $\overline D_{i+1}/Z_iD_i$. Moreover, for each $d_j\in\overline D_{i+1}$ there exists $d_j'\in A^m\cap\overline D_{i+1}$ with the same image as $d_j$ in $\overline D_{i+1}/Z_iD_i$. Every such commutator $[x_j,d_j]$ therefore lies in $A^{O_{K,(k-i)}(1)}D_i$. In particular, setting $D_i''=[C,\overline D_{i+1}]D_i'$ we have $D_i''\subseteq A^{O_{K,(k-i)}(1)}D_i$, with $D_i''$ normal in $C$.

The image of $A^m\cap\overline D_{i+1}$ in $C/D_i''$ is an abelian $O_{K,(k-i)}(1)$-approximate group by Lemma \ref{lem:int.app.grp}, and so it follows from Theorem \ref{thm:ab.frei} that there exist $y_1,\ldots,y_n\in A^{O_{K,(k-i)}(1)}\cap\overline D_{i+1}$, with $n\le O_{K,(k-i)}(1)$, and a set $B\subseteq A^{O_{K,(k-i)}(1)}\cap\overline D_{i+1}$ such that $BD_i''$ is a group, such that $\overline D_{i+1}=\langle y_1,\ldots,y_n,BD_i''\rangle$. Set $r(i)=r(i+1)+n$, and set $x_{r(i+1)+j}=y_j$ for $j=1,\ldots,n$.

Since the image of $\overline D_{i+1}$ in $C/D_i''$ is central, the group $BD_i''$ is normal. Note, moreover, that $BD_i''\subseteq A^{O_{K,(k-i)}(1)}D_i$. Thus we can finally define $\overline D_i=BD_i''$, and the claim, and hence the proposition, is proved.
\end{proof}

\section{Residually nilpotent groups}\label{sec:resid}
A group $G$ is said to be \emph{residually nilpotent} if for every non-identity element $g\in G$ there exists a nilpotent group $N$ and a homomorphism $\pi:G\to N$ such that $\pi(g)\ne1$. This is a strictly weaker condition than that of being nilpotent: finitely generated free groups are residually nilpotent, for example. In this section we extend our results from nilpotent groups to this more general setting, using an argument similar to one appearing in \cite{nfdl}.

It will be convenient first to note that being residually nilpotent is in fact equivalent to an apparently slightly stronger condition, as follows.
\begin{lemma}\label{lem:strong.resid}
Let $G$ be a residually nilpotent group and let $A\subseteq G$ be a finite set such that $1\notin A$. Then there exists a nilpotent group $N$ and a homomorphism $\pi:G\to N$ such that $A\cap\ker\pi=\varnothing$.
\end{lemma}
\begin{proof}
By definition, for each $a\in A$ there exists a nilpotent group $N_a$ and a homomorphism $\pi_a:G\to N_a$ such that $\pi_a(a)\ne1$. In particualar, writing $s_a$ for the step of $N_a$ and $G=G_1\supseteq G_2\supseteq\ldots$ for the lower central series of $G$ we have $a\notin G_{s_a+1}$. Writing $s=\max_{a\in A}s_a$, we may therefore take $\pi$ to be the projection homomorphism $\pi:G\to G/G_{s+1}$.
\end{proof}

\begin{lemma}\label{lem:resid.reduc}
Let $G$ be a group, and let $A\subseteq G$ be a symmetric set containing the identity. Let $N$ be another group, and let $\pi:G\to N$ be a homomorphism. Let $H\subset\pi(A)$ be a subgroup of $N$. Then we have the following.
\begin{enumerate}
\item\label{item:2} If $A^2\cap\ker\pi=\{1\}$ then $\pi$ is injective on $A$.
\item\label{item:3} If $A^3\cap\ker\pi=\{1\}$ then there exists a subgroup $H'\subseteq A$ isomorphic to $H$ via $\pi$.
\item\label{item:4} If $A^4\cap\ker\pi=\{1\}$ then $H'$ is normal in $\langle A\rangle$ if and only if $H$ is normal in $\langle\pi(A)\rangle$.
\end{enumerate}
\end{lemma}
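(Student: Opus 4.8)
The plan is to prove the three parts of Lemma \ref{lem:resid.reduc} in order, each time exploiting the fact that if $A^j\cap\ker\pi=\{1\}$ then $\pi$ is injective on $A^{\lfloor j/2\rfloor}$, since any two elements of $A^{\lfloor j/2\rfloor}$ with the same image differ by an element of $A^j\cap\ker\pi=\{1\}$ (using that $A$ is symmetric and contains $1$, so $A^{a}A^{b}\subseteq A^{a+b}$).

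For part \eqref{item:2}, this observation with $j=2$ gives injectivity of $\pi$ on $A=A^1$ immediately. For part \eqref{item:3}, the hypothesis $A^3\cap\ker\pi=\{1\}$ makes $\pi$ injective on $A^1$, but we need a bit more: since $H\subseteq\pi(A)$, for each $h\in H$ there is a unique $a_h\in A$ with $\pi(a_h)=h$; I would set $H'=\{a_h:h\in H\}\subseteq A$ and check it is a subgroup. Given $h_1,h_2\in H$, the product $a_{h_1}a_{h_2}$ lies in $A^2$ and has image $h_1h_2\in H\subseteq\pi(A)$, so there is a unique $a_{h_1h_2}\in A$ with the same image; since $\pi$ is injective on $A^2$ (as $A^4\supseteq A^3$... but we only have $A^3\cap\ker\pi=\{1\}$, giving injectivity on $A$, not $A^2$), one must be slightly careful — in fact the right statement is that $a_{h_1}a_{h_2}$ and $a_{h_1h_2}$ both lie in $A^2$ and differ by an element of $A^3\cap\ker\pi$... this forces them equal only if that intersection is trivial, which is exactly the hypothesis, so $a_{h_1}a_{h_2}=a_{h_1h_2}\in A$. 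Closure under inverses is similar and easier. Hence $H'\subseteq A$ is a subgroup and $\pi|_{H'}:H'\to H$ is an isomorphism.

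For part \eqref{item:4}, with $A^4\cap\ker\pi=\{1\}$ we get injectivity of $\pi$ on $A^2$. Suppose $H$ is normal in $\langle\pi(A)\rangle=\pi(\langle A\rangle)$; to show $H'$ normal in $\langle A\rangle$ it suffices to show $a^{-1}H'a\subseteq H'$ for every $a\in A$, since $A$ generates $\langle A\rangle$ and $A$ is symmetric. Fix $a\in A$ and $a_h\in H'$: then $a^{-1}a_ha\in A^3$, and $\pi(a^{-1}a_ha)=\pi(a)^{-1}h\pi(a)\in H$ by normality of $H$, so there is a unique $b\in A$ with $\pi(b)$ equal to this element; then $b$ and $a^{-1}a_ha$ both lie in $A^3$... again I need injectivity on $A^3$, which requires $A^6\cap\ker\pi=\{1\}$, not $A^4$. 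So instead I would argue: $\pi(a^{-1}a_ha)\in H$, so it equals $\pi(a_{h'})$ for the unique representative $a_{h'}\in H'\subseteq A$ of the corresponding $h'\in H$; then $a_{h'}^{-1}a^{-1}a_ha\in A^4\cap\ker\pi=\{1\}$ (here $a_{h'}^{-1}\in A$, $a^{-1}\in A$, $a_h\in A$, $a\in A$, total length $4$), hence $a^{-1}a_ha=a_{h'}\in H'$. This proves one direction; the converse follows by applying $\pi$, since $\pi(a^{-1}H'a)=\pi(a)^{-1}H\pi(a)$ and $\pi$ restricted to the relevant sets is injective.

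The main obstacle is purely bookkeeping of word lengths: at each step one must verify that the relevant products land in a power of $A$ low enough that the given kernel-intersection hypothesis forces the desired equality, and in particular one should resist the temptation to invoke injectivity of $\pi$ on a power of $A$ that is larger than what the hypothesis actually gives. The correct idiom throughout is not "$\pi$ is injective on $A^m$" but rather "if two elements of $A^m$ have the same image, and $A^m\cap\ker\pi=\{1\}$, then they are equal" — this is what keeps the word-length accounting honest, since the difference of two elements of $A^m$ (with $A$ symmetric) lies in $A^{2m}$, so one genuinely does need the hypothesis at the right level. No serious mathematical difficulty arises beyond this.
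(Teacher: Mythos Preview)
Your proposal is correct and follows essentially the same route as the paper: for each part you pick, for each $h\in H$, the unique preimage $a_h\in A$ and then verify the relevant group-theoretic identity by checking that the appropriate word in the $a_h$'s and elements of $A$ lies in $A^j\cap\ker\pi=\{1\}$ for the value of $j$ given in the hypothesis. The paper's proof is exactly this (with $\phi(h)$ in place of your $a_h$), and your closing paragraph identifying the ``correct idiom'' is precisely the mechanism the paper uses; your exploratory false starts about injectivity on $A^2$ or $A^3$ are unnecessary once you adopt that idiom, and the converse direction of \eqref{item:4} (which the paper does not spell out) is indeed immediate by applying $\pi$.
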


\begin{proof}
Item \eqref{item:2} follows from the fact that if $\pi(a)=\pi(a')$ then $a^{-1}a'\in\ker\pi$, and in turn implies that for each $h\in H$ there is a unique $\phi(h)\in A$ such that $\pi(\phi(h))=h$. Given $h,h'\in H$ we have $\phi(h)\phi(h')\phi(hh')^{-1}\in A^3\cap\ker\pi$. If $A^3\cap\ker\pi=\{1\}$, it therefore follows that $\phi(h)\phi(h')=\phi(hh')$, and hence that $H'=\phi(H)\subseteq A$ is a subgroup. Item \eqref{item:2} implies moreover that $\pi|_{H'}:H'\to H$ is an isomorphism.

If $H$ is normal in $\langle\pi(A)\rangle$ then for every $a\in A$ and $h\in H$ there exists $\hat h\in H$ such that $\pi(a^{-1})h\pi(a)=\hat h$. In particular, $a^{-1}\phi(h)a\phi(\hat h^{-1})\in\ker\pi\cap A^4$, so if $A^4\cap\ker\pi=\{1\}$ then $a^{-1}\phi(h)a\in H'$, and hence $H'$ is normal in $\langle A\rangle$.
\end{proof}

\begin{proof}[Proof of Theorem \ref{thm:pgrp}]
The theorem holds for nilpotent groups by the proof given at the end of Section \ref{sec:prelim}. Set $M=(3r+2)K^6+3$. Lemma \ref{lem:strong.resid} implies that there exists a homomorphism $\pi$ from $G$ to a nilpotent group $N$ such that $A^{3M}\cap\ker\pi=\{1\}$. Applying the nilpotent version of the theorem to $\pi(A)$, we conclude that there exists a subgroup $C\subseteq\pi(A^{M-1})$ and a subset $X\subseteq A$ with $|X|\le K^{35K^6+2}$ such that $\pi(A)\subseteq\pi(X)C$. However, Lemma \ref{lem:resid.reduc} \eqref{item:3} implies that there exists a subgroup $C'\subseteq A^{M-1}$ such that $\pi|_{C'}:C'\to C$ is an isomorphism, and so for every $a\in A$ there exist $x\in X$ and $c\in C'$ such that $\pi(a)=\pi(xc)$. Since $\pi$ is injective on $A^M$ by Lemma \ref{lem:resid.reduc} \eqref{item:2}, we conclude that $a=xc$, and so $A\subseteq XC'$ and the theorem is proved.
\end{proof}

Recall that the \emph{simple commutator of weight $k$} in the elements $x_1,\ldots,x_k$ is defined inductively by $[x_1,x_2]:=x_1^{-1}x_2^{-2}x_1x_2$ and $[x_1,\ldots,x_k]:=[[x_1,\ldots,x_{k-1}],x_k]$, and that a group $G$ generated by a set $X$ is nilpotent of step $s$ if and only if every simple commutator of weight $s+1$ in elements of $X$ is trivial \cite{hall}.

\begin{proof}[Proof of Theorem \ref{thm:nag}]
Let $m$ be the quantity implied by the $O_K(1)$ notation in Proposition \ref{prop:nag} \ref{item:H}, let $\ell$ be the word length of a simple commutator of weight $K^6+1$, and let $M\ge m(\ell+1)$. Lemma \ref{lem:strong.resid} implies that there exists a homomorphism $\pi$ from $G$ to a nilpotent group $N$ such that $A^{4M}\cap\ker\pi=\{1\}$. Applying Proposition \ref{prop:nag} to $\pi(A)$, we conclude that there exist subgroups $H\lhd C\subseteq N$ such that $H\subseteq\pi(A^m)$, such that $C/H$ is nilpotent of step at most $K^6$, such that $C$ is generated by $\pi(A^6)\cap C$, and such that $|\pi(A^2)\cap C|\ge\exp(-K^{O(1)})|\pi(A)|$.

Define $C'=\langle A^m\cap\pi^{-1}(C)\rangle$, noting that $\pi(C')=C$. Note also that $H\subseteq\pi(A^m\cap\pi^{-1}(C))=\pi(A^m\cap C')$, and so Lemma \ref{lem:resid.reduc} implies that there is a normal subgroup $H'\lhd C'$ such that $H'\subseteq A^m$ and such that $\pi|_{H'}:H'\to H$ is an isomorphism.

Set $k=K^6$. Following \cite{nfdl}, if $x_1,\ldots,x_{k+1}\in A^m\cap C'$ then the nilpotency of $C/H$ implies that $[\pi(x_1),\ldots,\pi(x_{k+1})]\in H$, which implies that there exists $h\in H'$ such that $[\pi(x_1),\ldots,\pi(x_{k+1})]\pi(h)=1$. By Lemma \ref{lem:resid.reduc} \eqref{item:2} this implies that $[x_1,\ldots,x_{k+1}]h=1$, and so we conclude that $C'/H'$ is nilpotent of step at most $K^6$.

Finally, note that $\pi(A^2)\cap C=\pi(A^2\cap C')$, and hence, by Lemma \ref{lem:resid.reduc} \eqref{item:2}, that  $|A^2\cap C'|=|\pi(A^2)\cap C|\ge\exp(-K^{O(1)})|\pi(A)|=\exp(-K^{O(1)})|A|$. The theorem therefore follows from Corollary \ref{cor:ruz.cov}.
\end{proof}

\begin{proof}[Proof of Corollary \ref{cor:nag}]
It follows from Theorem \ref{thm:nag} and Lemma \ref{lem:2.2.ii} that $A$ can be covered by $\exp(K^{O(1)})$ translates of $A^2\cap C$. Writing $\pi:C\to C/H$, Lemma \ref{lem:int.app.grp} implies that $\pi(A^2\cap C)$ is a $K^3$-approximate subgroup of the $K^6$-step nilpotent group $C/H$. The result then follows from Theorem \ref{thm:step}.
\end{proof}

\section{Growth in groups}\label{sec:growth}
\begin{proof}[Proof of Corollary \ref{cor:growth}]
We start by proving the corollary under the additional assumption that $n\ge N$, where $N\in\N$ is some constant to be determined shortly. If \eqref{eq:1-sc.gap} holds then in particular we have $|S^n|\le n^{c\log\log n}|S^{\lceil n^{1/2}\rceil}|$, which we may re-write as $|S^n|\le (\log n)^{c\log n}|S^{\lceil n^{1/2}\rceil}|=(\log n)^{2c\log n^{1/2}}|S^{\lceil n^{1/2}\rceil}|$. This implies that there exists $r\in[0,\log_5 n^{1/2}]$ such that
\[
|S^{5^{r+1}\lceil n^{1/2}\rceil}|\le (\log n)^{O(c)}|S^{5^r\lceil n^{1/2}\rceil}|.
\]
It then follows from \cite[Lemma 2.2]{bt}, for example, that $S^{2\cdot5^r\lceil n^{1/2}\rceil}$ is a $(\log n)^{O(c)}$-approximate group. Provided $c$ is small enough and $N$ is large enough, Theorem \ref{thm:nag} therefore implies that there is a subgroup $C$ of $G$ and a normal subgroup $H\lhd C$ contained in $S^{O_n(1)}$ such that $C/H$ is nilpotent of step at most $\log n-1$, and such that $S^{\lceil n^{1/2}\rceil}$ is contained in at most $n^{1/2}$ left cosets of $C$. In particular, \cite[Lemma 2.7]{bt} implies that $C$ has index at most $n^{1/2}$ in $G$.

Following the proof of \cite[Corollary 11.7]{bgt}, note that $C$ acts on $H$ by conjugation. Since $|H|\le O_n(1)$, the permutation group of $H$ has cardinality at most $O_n(1)$, and hence the stabiliser $C'<C$ of this action has index at most $O_n(1)$. This proves the corollary for $n\ge N$, since $C'$ is nilpotent of step at most $\log n$.

Replacing $c$ by a smaller constant if necessary, we may assume that $N^{c\log\log N}<2$; this ensures that if \eqref{eq:1-sc.gap} holds for some $n\in[2,N]$ then $G$ is the trivial group, and hence satisfies the  the corollary. This completes the proof for all $n\ge2$.
\end{proof}

\end{document}